\newtheorem{theorem}{Theorem}
\newtheorem{property}{Property}
\newtheorem{lemma}{Lemma}
\title{Optimizing over trained GNNs via symmetry breaking}
\author{
  Shiqiang Zhang\thanks{Corresponding author: s.zhang21@imperial.ac.uk} \\
  Imperial College London\\
  London, UK\\
  \And
  Juan S. Campos\\
  Imperial College London\\
  London, UK\\
  \And
  Christian Feldmann\\
  BASF SE\\
  Ludwigshafen, Germany\\
  \And
  David Walz\\
  BASF SE\\
  Ludwigshafen, Germany\\
  \And
  Frederik Sandfort\\
  BASF SE\\
  Ludwigshafen, Germany\\
  \And
  Miriam Mathea\\
  BASF SE\\
  Ludwigshafen, Germany\\
  \And
  Calvin Tsay\\
  Imperial College London\\
  London, UK\\
  \And
  Ruth Misener\\
  Imperial College London\\
  London, UK\\
}
\begin{document}

\maketitle

\begin{abstract}
Optimization over trained machine learning models has applications including: verification, minimizing neural acquisition functions, and integrating a trained surrogate into a larger decision-making problem. This paper formulates and solves optimization problems constrained by trained graph neural networks (GNNs). To circumvent the symmetry issue caused by graph isomorphism, we propose two types of symmetry-breaking constraints: one indexing a node 0 and one indexing the remaining nodes by lexicographically ordering their neighbor sets. To guarantee that adding these constraints will not remove all symmetric solutions, we construct a graph indexing algorithm and prove that the resulting graph indexing satisfies the proposed symmetry-breaking constraints. For the classical GNN architectures considered in this paper, optimizing over a GNN with a fixed graph is equivalent to optimizing over a dense neural network. Thus, we study the case where the input graph is not fixed, implying that each edge is a decision variable, and develop two mixed-integer optimization formulations. To test our symmetry-breaking strategies and optimization formulations, we consider an application in molecular design. 
\end{abstract}

\section{Introduction}\label{sec:introduction}
Graph neural networks (GNNs) \cite{Zhou2020,Wu2020a,Chami2022} are designed to operate on graph-structured data. By passing messages between nodes via edges (or vice versa), GNNs can efficiently capture and aggregate local information within graphs. GNN architectures including spectral approaches \cite{Bruna2014,Defferrard2016,Kipf2017,Li2018,Zhuang2018,Xu2018} and spatial approaches \cite{Duvenaud2015,Atwood2016,Niepert2016,Hamilton2017,Velivckovic2017,Monti2017,Gilmer2017,Gao2018,Zhang2018a}, are proposed based on various motivations. Due to their ability to learn non-Euclidean data structure, GNNs have recently been applied to many graph-specific tasks, demonstrating incredible performance. In drug discovery, for example, GNNs can predict molecular properties given the graph representation of molecules \cite{Gilmer2017,Shindo2019,Wang2019,Xu2017,Withnall2020,Schweidtmann2020}. Other applications using GNNs include graph clustering\cite{Ying2018,Zhang2019}, text classification \cite{Peng2018,Yao2019}, and social recommendation \cite{Wu2019,Fan2019}.  

Although GNNs are powerful tools for these "forward" prediction tasks, few works discuss the "backward" (or inverse) problem defined on trained GNNs. Specifically, many applications motivate the inverse problem of using  machine learning (ML) to predict the inputs corresponding to some output specification(s). For example, computer-aided molecular design (CAMD) \cite{Gani2004,Ng2014,Austin2016} aims to design the optimal molecule(s) for a given target based on a trained ML model \cite{Elton2019,Alshehri2020,Faez2021,Gao2022}. 
Several GNN-based approaches have been proposed for CAMD \cite{Xia2019,Yang2019,Xiong2021,Rittig2022}, however, these works still use GNNs as "forward" functions to make predictions over graph-domain inputs. Therefore, both the graph structures of inputs and the inner structures of the trained GNNs are ignored. Figure \ref{forward-backward} conceptually depicts the difference between forward and backward/inverse problems.

\begin{figure}[t]
    \centering
    \includegraphics[scale=0.95]{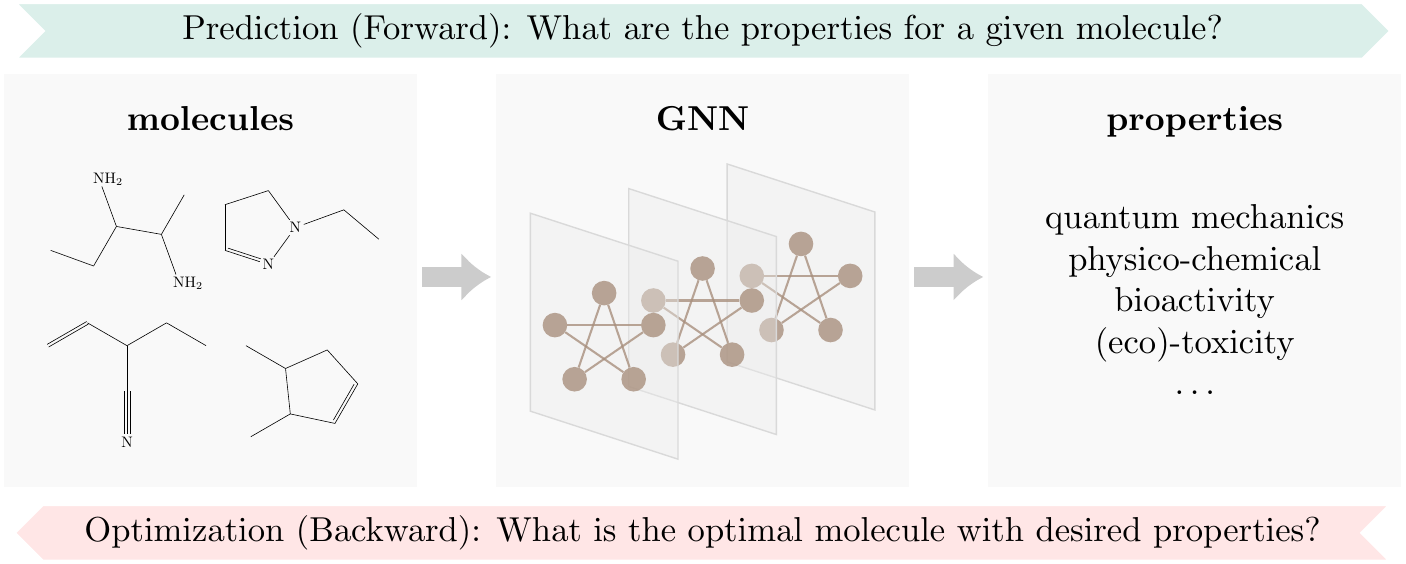}
    \caption{Illustration of forward and backward problems defined on trained GNNs.}
    \label{forward-backward}
\end{figure}

Mathematical optimization over trained ML models is an area of increasing interest. In this paradigm, a trained ML model is translated into an optimization formulation, thereby enabling decision-making problems over model predictions. For continuous, differentiable models, these problems can be solved via gradient-based methods \cite{Szegedy2014,Wu2020b,Bunel2020a,Horvath2021}. Mixed-integer programming (MIP) has also been proposed, mainly to support non-smooth models such as ReLU neural networks \cite{Croce2019,Anderson2020,Fischetti2018,Lomuscio2017,Tsay2021,De2021}, their ensembles \cite{Wang2023}, and tree ensembles \cite{Misic2020,Mistry2021,Thebelt2021}. 
Optimization over trained ReLU neural networks has been especially prominent \cite{Huchette2023}, finding applications such as verification \cite{Cheng2017,Bunel2018,Zhang2018b,Tjeng2019,Botoeva2020,Bunel2020b,Bunel2020c,Xu2021,Wang2021a}, reinforcement learning \cite{Say2017,Delarue2020,Ryu2020}, compression \cite{Serra2021}, and black-box optimization \cite{Papalexopoulos2022}. There is no reason to exclude GNNs from this rich field. Wu et al. \cite{Wu2022} managed to verify a GNN-based job scheduler, where MIP is used to obtain tighter bounds in a forward-backward abstraction refinement framework. Mc Donald \cite{Donald2022} built MIP formulations for GCN \cite{Kipf2017} and GraphSAGE \cite{Hamilton2017} and tested their performance on a CAMD problem, which is the first work to directly apply MIP on GNNs to the best of our knowledge.  

Optimization over trained GNNs involves two significant challenges. One is the symmetry issue arising from the permutation invariance of GNNs (i.e., isomorphism graphs share the same GNN output(s)). For forward problems, this invariance is preferred since it admits any graph representation for inputs. From optimization perspective, however, symmetric solutions can significantly enlarge the feasible domain and impede a branch-and-bound (B\&B) search tree. One way to break symmetry in integer programs is by adding constraints to remove symmetric solutions \cite{Friedman2007,Liberti2008,Margot2009,Liberti2012,Liberti2014,Dias2015,Hojny2019}. The involvement of graphs not only adds complexity to the problem in terms of symmetry, but also in terms of implementation. Due to the complexity and variety of GNN architectures, a general framework is needed. This framework should be compatible with symmetry-breaking techniques. 

This paper first defines optimization problems on trained GNNs. To handle the innate symmetry, we propose two sets of constraints to remove most isomorphism graphs (i.e., different indexing for any abstract graph). To guarantee that these constraints do not exclude all possible symmetries (i.e., any abstract graph should have at least one feasible indexing), we design an algorithm to index any undirected graph and prove that the resulting indexing is feasible under the proposed constraints. To encode GNNs, we build a general framework for MIP on GNNs based on the open-source Optimization and Machine Learning Toolkit (OMLT) \cite{Ceccon2022}. Many classic GNN architectures are supported after proper transformations. Two formulations are provided and compared on a CAMD problem. Numerical results show the outstanding improvement of symmetry-breaking.

\textbf{Paper structure:}
Section \ref{sec:methodology} introduces the problem definition, proposes the symmetry-breaking constraints, and gives theoretical guarantees. Section \ref{sec:experiment} discusses the application, implementation details, and numerical results. Section \ref{sec:conclusion} concludes and discusses future work.

\section{Methodology}\label{sec:methodology}
\subsection{Definition of optimization over trained GNNs}\label{subsec:set-up}
We consider optimization over a trained GNN on a given dataset $D=\{(X^i,A^i),y^i\}_{i=1}^M$. Each sample in the dataset consists of an input graph (with features $X^i$ and adjacency matrix $A^i$) and an output property $y^i$. The target of training is to approximate properties, that is:
    \begin{equation*}
        \begin{aligned}
            \mathit{GNN}(X^i,A^i)\approx y^i, ~\forall 1\le i\le M
        \end{aligned}
    \end{equation*}
When optimizing over a trained GNN, the goal is to find the input with optimal property:
    \begin{equation}\tag{OPT} \label{eq:optimization}
        \begin{aligned}
            (X^*,A^*)=\mathop{\arg\min\limits}_{(X,A)}~&\mathit{GNN}(X,A)\\
            s.t.~&f_j(X,A)\le 0,j\in \mathcal J\\
            ~&g_k(X,A)=0,k\in\mathcal K
        \end{aligned}
    \end{equation}
where $f_j,g_k$ are problem-specific constraints and $\mathcal J,\mathcal K$ are index sets. 
Note that optimality of \eqref{eq:optimization} is defined on the trained GNN instead of true properties. To simplify our presentation, we will focus on undirected graphs with node features, noting that directed graphs can be naturally transformed into undirected graphs. 
Likewise, using edge features does not influence our analyses. 
We discuss practical impacts of such modifications later.

\subsection{Symmetry handling}\label{subsec:symmetry issue}
For any input $(X,A)$, assume that there are $N$ nodes, giving a $N\times N$ adjacency matrix $A$ and a feature matrix $X=(X_0,X_1,\dots,X_{N-1})^T\in \mathbb R^{N\times F}$, where $X_i\in\mathbb R^{F}$ contains the features for $i$-th node. For any permutation $\gamma$ over set $[N]:=\{0,1,2\dots,N-1\}$, let:
\begin{equation*}
    \begin{aligned}
        A'_{u,v}=A_{\gamma(u),\gamma(v)},~X'_v=X_{\gamma(v)},~\forall u,v\in [N]
    \end{aligned}
\end{equation*}
Then the permuted input $(X',A')$ has isomorphic graph structure, and therefore the same output, due to the permutation invariance of GNNs (i.e., $\mathit{GNN}(X,A)=\mathit{GNN}(X',A')$). In other words, the symmetries result from different indexing for the same graph. In general, there exist $N!$ ways to index $N$ nodes, each of which corresponds to one solution of \eqref{eq:optimization}. Mc Donald \cite{Donald2022} proposed a set of constraints to force molecules connected, which can also be used to break symmetry in our general setting. Mathematically, they can be written as:
\begin{equation}\label{connectivity}\tag{S1}
    \begin{aligned}
        \forall v\in [N]\backslash\{0\},~\exists u<v,~s.t.~A_{u,v}=1
    \end{aligned}
\end{equation}
Constraints \eqref{connectivity} require each node (except for node $0$) to be linked with a node with smaller index. Even though constraints \eqref{connectivity} help break symmetry, there can still exist many symmetric solutions. To resolve, or at least relieve, this issue, we need to construct more symmetry-breaking constraints. 

\textbf{Breaking symmetry at the feature level.} 
We can first design some constraints on the features to define a starting point for the graph (i.e., assigning index $0$ to a chosen node). Note that multiple nodes may share the same features. Specifically, we define an arbitrary function $h:\mathbb R^F\to \mathbb R$, to assign a hierarchy to each node and force that node $0$ has the minimal function value:
\begin{equation}\label{index 0}\tag{S2}
    \begin{aligned}
        h(X_{0})\le h(X_{v}),~\forall v\in [N]\backslash\{0\}
    \end{aligned}
\end{equation}
The choice of $h$ can be application-specific, for example, as in Section \ref{subsec:MIP-CAMD}. The key is to define $h$ such that not too many nodes share the same minimal function value.

\textbf{Breaking symmetry at the graph level.} 
It is unlikely that constraints on features will break much of the symmetry. After adding constraints \eqref{connectivity} and \eqref{index 0}, any neighbor of node $0$ could be indexed $1$, then any neighbor of node $0$ or $1$ could be indexed $2$, and so on. We want to limit the number of possible indexing. A natural idea is to account for neighbors of each node: the neighbor set of a node with smaller index should also have smaller lexicographical order. 

Before further discussion, we need to define the lexicographical order. Let $\mathcal S(M,L)$ be the set of all non-decreasing sequences with $L$ integer elements in $[0,M]$ (i.e., $\mathcal S(M,L)= \{(a_1,a_2,\dots,a_L)~|~ 0\le a_1 \le a_2  \le \cdots \le a_L \le M\}$). 
For any $a\in \mathcal S(M,L)$, denote its lexicographical order by $LO(a)$. Then for any $a,b\in \mathcal S(M,L)$, we have:
\begin{equation}\label{lexicographical order}\tag{LO}
    \begin{aligned}
        LO(a)<LO(b)~\Leftrightarrow~\exists \;l \in \{1,2,\dots,L\},~s.t. 
        \begin{cases}
            a_i=b_i, & 1\le i<l \\
            a_l<b_l &
        \end{cases}
    \end{aligned}
\end{equation}
For any multiset $A$ with no more than $L$ integer elements in $[0,M-1]$, we first sort all elements in $A$ in a non-decreasing order, then add elements with value $M$ until the length equals to $L$. In that way, we build an injective mapping from $A$ to a sequence $a\in \mathcal S(M,L)$. Define the lexicographical order of $A$ by $LO(a)$. For simplicity, we use $LO(A)$ to denote the lexicographical order of $A$. 

\begin{algorithm}[htp]
    \caption{Indexing algorithm}\label{index_algorithm}
    \begin{algorithmic}
    \State \textbf{Input:} $G=(V,E)$ with node set $V=\{v_0,v_1,\dots,v_{N-1}\}$ ($N:=|V|$). Denote the neighbor set of node $v$ as $\mathcal N(v),~\forall v\in V$.
    \vspace{2mm}
    
    \State $\mathcal I(v_0)\gets 0$ \Comment{Assume that $v_0$ is indexed with $0$}
    \vspace{2mm}
    
    \State $s\gets 1$ \Comment{Index for next node}
    \vspace{2mm}

    \State $V_1^1\gets\{v_0\}$ \Comment{Initialize set of indexed nodes}
    \vspace{2mm}
    
    \While{$s<N$}
    \vspace{2mm}
    
    \State $V_2^s\gets V\backslash V_1^s$ \Comment{Set of unindexed nodes}
    \vspace{2mm}
    
    \State $\mathcal N^{s}(v)\gets \{\mathcal I(u)~|~u\in \mathcal N(v)\cap V_1^{s}\},~\forall v\in V_2^{s}$ \Comment{Obtain all indexed neighbors}
    \vspace{2mm}

    \State $rank^s(v)\gets\left|\{LO(\mathcal N^s(u))<LO(\mathcal N^s(v))~|~\forall u\in V_2^s\}\right|,~\forall v\in V_2^s$ 
    \State \Comment{Assign a rank to each unindexed node}
    \vspace{2mm}
    
    \State $\mathcal I^{s}(v)\gets 
            \begin{cases}
                \mathcal I(v),&\forall v\in V_1^{s}\\
                rank^s(v)+s,&\forall v\in V_2^{s}
            \end{cases}$ \Comment{Assign temporary indexes}
    \vspace{2mm}
    
    \State $\mathcal N^{s}_{t}(v)\gets \{\mathcal I^{s}(u)~|~u\in\mathcal N(v)\},~\forall v\in V_2^{s}$ \Comment{Define temporary neighbor sets based on $\mathcal I^{s}$}
    \vspace{2mm}
    
    \State $v^{s}\gets \arg\min\limits_{v\in V_2^{s}} LO(\mathcal N^{s}_{t}(v))$ \Comment{Neighbors of $v^{s}$ has minimal order}
    \State \Comment{If multiple nodes share the same minimal order, arbitrarily choose one}
    \vspace{2mm}
    
    \State $\mathcal I(v^s)=s$ \Comment{Index $s$ to node $v^s$}
    \vspace{2mm}
    
    \State $V_1^{s+1}\gets V_1^{s}\cup \{v^{s}\}$ \Comment{Add $v^{s}$ to set of indexed nodes}
    \vspace{2mm}
    
    \State $s\gets s+1$ \Comment{Next index is $s+1$}
    \vspace{2mm}
    
    \EndWhile
    \vspace{2mm}
    
    \State \textbf{Output:} $\mathcal I(v),v\in V$ \Comment{Result indexing}
    \end{algorithmic}
\end{algorithm}

\emph{Remark:} The definition of $LO(\cdot)$ and Section \ref{subsec:theory} reuse $A$ to denote a multiset. This will not cause ambiguity since multisets are only introduced in the theoretical part, where the adjacency matrix is not involved. For brevity, we use capital letters $A,B,\dots$ to denote multisets and lowercase letters $a,b,\dots$ to denote the corresponding sequences in $\mathcal S(M,L)$.

Since we only need lexicographical orders for all neighbor sets of the nodes of the graph, let $M=N,L=N-1$. With the definition of $LO(\cdot)$, we can represent the aforementioned idea by:
\begin{equation}\label{index rest}\tag{S3}
    \begin{aligned}
        LO(\mathcal N(v)\backslash\{v+1\})\le LO(\mathcal N(v+1)\backslash \{v\}),~\forall v\in [N-1]\backslash\{0\}
    \end{aligned}
\end{equation}
where $\mathcal N(v)$ is the neighbor set of node $v$. Note that the possible edge between nodes $v$ and $v+1$ is ignored in \eqref{index rest} to include the cases that they are linked and share all neighbors with indexes less than $v$. In this case, $LO(\mathcal N(v+1))$ is necessarily smaller than $LO(\mathcal N(v))$ since $v\in \mathcal N(v+1)$. 

Constraints \eqref{index rest} exclude many ways to index the rest of the nodes, for example they reduce the possible ways to index the graph in Figure \ref{Example of indexing} from 120 to 4 ways. But, we still need to ensure that there exists at least one feasible indexing for any graph after applying constraints \eqref{index rest}. Algorithm \ref{index_algorithm} constructs an indexing and Section \ref{subsec:theory} provides a theoretical guarantee of its feasibility.

\begin{figure}[htp]
    \centering
    \includegraphics{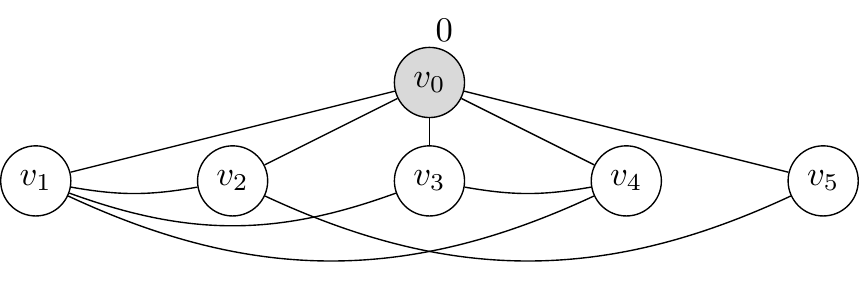}
    \caption{Consider a graph having $6$ nodes with different features. Without any constraints, there are $6!=720$ ways to index it. Utilizing \eqref{connectivity} to force connectivity results in $636$ ways. Using \eqref{index 0} to choose $v_0$ to be indexed $0$, then there are $5!=120$ ways. Finally, applying \eqref{index rest} to order the rest of the nodes, there are only $4$ ways, $2$ of which can be derived from Algorithm \ref{index_algorithm}. For details on using Algorithm \ref{index_algorithm} to index this graph, see Appendix \ref{appendix:example}.}
    \label{Example of indexing}
\end{figure}

\subsection{Theoretical guarantee}\label{subsec:theory}
This section proves that Algorithm \ref{index_algorithm} always provides feasible indexing. We first give some properties that are used to derive an important lemma. 

\begin{property}
For any $s=1,2,\dots,N-1$, $\mathcal I^s(v)
            \begin{cases}
                <s,&\forall v\in V_1^s\\
                \ge s,&\forall v\in V_2^s
            \end{cases}$.
\end{property}

\begin{property}
For any $s_1,s_2=1,2,\dots,N-1$,
    \begin{equation*}
        \begin{aligned}
            s_1\le s_2
            \Rightarrow
            \mathcal N^{s_1}(v)=\mathcal N^{s_2}(v)\cap [s_1],~\forall v\in V_2^{s_2}
        \end{aligned}
    \end{equation*}
\end{property}

\begin{property}
    Given any multisets $A,B$ with no more than $L$ integer elements in $[0,M-1]$, we have:
    \begin{equation*}
        \begin{aligned}
            LO(A)\le LO(B) \Rightarrow LO(A\cap [m])\le LO(B\cap [m]),~\forall m =1,2,\dots, M
        \end{aligned}
    \end{equation*}
\end{property}

Using these properties, we can prove Lemma \ref{index_lemma_1}, which shows that if the final index of node $u$ is smaller than $v$, then at each iteration, the temporary index assigned to $u$ is not greater than $v$.

\begin{lemma}\label{index_lemma_1}
    For any two nodes $u$ and $v$, 
    \begin{equation*}
        \begin{aligned}
            \mathcal I(u)<\mathcal I(v)
            \Rightarrow
            \mathcal I^s(u)\le \mathcal I^s(v),~\forall s=1,2,\dots,N-1
        \end{aligned}
    \end{equation*}
\end{lemma}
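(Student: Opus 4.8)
The plan is to fix an index $s\in\{1,\dots,N-1\}$ and split on where $s$ sits relative to $p:=\mathcal I(u)$ and $q:=\mathcal I(v)$, which satisfy $p<q$ by hypothesis. By construction of the sets, $w\in V_1^s$ exactly when $\mathcal I(w)<s$ and $w\in V_2^s$ exactly when $\mathcal I(w)\ge s$, and $\mathcal I^s$ agrees with $\mathcal I$ on $V_1^s$. Hence if $s>q$ then $u,v\in V_1^s$ and $\mathcal I^s(u)=\mathcal I(u)=p<q=\mathcal I(v)=\mathcal I^s(v)$; and if $p<s\le q$ then $u\in V_1^s$ while $v\in V_2^s$, so Property 1 gives $\mathcal I^s(u)<s\le\mathcal I^s(v)$. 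The only substantive case is $s\le p$ (so here $p\ge 1$), where $u,v\in V_2^s$ and, since $\mathcal I(u)=p$ and $\mathcal I(v)=q>p$, also $u,v\in V_2^p$.

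For that case the idea is to push the comparison back to iteration $p$, at which $u$ is indexed. Since indices are assigned only once and $\mathcal I(v^p)=p=\mathcal I(u)$, we have $u=v^p$; and because $v^p$ minimizes $LO(\mathcal N^p_t(\cdot))$ over $V_2^p$, which contains $v$, we get $LO(\mathcal N^p_t(u))\le LO(\mathcal N^p_t(v))$. To convert this into a statement about $\mathcal N^s$, I would first establish the restriction identity $\mathcal N^s(w)=\mathcal N^p_t(w)\cap[s]$ for every $w\in V_2^p$ and every $s\le p$. Property 1 says that among the entries of $\mathcal N^p_t(w)=\{\mathcal I^p(x): x\in\mathcal N(w)\}$ exactly those coming from $\mathcal N(w)\cap V_1^p$ are $<p$, and these entries equal $\{\mathcal I(x): x\in\mathcal N(w)\cap V_1^p\}=\mathcal N^p(w)$, so $\mathcal N^p_t(w)\cap[p]=\mathcal N^p(w)$; combining with Property 2 in the form $\mathcal N^s(w)=\mathcal N^p(w)\cap[s]$ and using $s\le p$ yields the identity. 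Now I would apply Property 3 to the multisets $\mathcal N^p_t(u),\mathcal N^p_t(v)$ at $m=s$ to obtain $LO(\mathcal N^s(u))\le LO(\mathcal N^s(v))$. Since $LO$ is a strict total order, this gives $\{w\in V_2^s: LO(\mathcal N^s(w))<LO(\mathcal N^s(u))\}\subseteq\{w\in V_2^s: LO(\mathcal N^s(w))<LO(\mathcal N^s(v))\}$, hence $rank^{s}(u)\le rank^{s}(v)$ and therefore $\mathcal I^s(u)=rank^{s}(u)+s\le rank^{s}(v)+s=\mathcal I^s(v)$.

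I expect the main obstacle to be the bookkeeping in the restriction identity: cleanly relating the temporary indexing $\mathcal I^p$ appearing inside $\mathcal N^p_t$ to the final indexing $\mathcal I$ appearing inside $\mathcal N^p$ and $\mathcal N^s$, and then composing the two ``intersect with an initial segment'' steps in the right order. A related subtlety I would state explicitly is that $\mathcal I^p$ need not be injective on $V_2^p$ (nodes with tied ranks receive the same temporary index), so $\mathcal N^p_t(w)$ is genuinely a multiset and it matters that Property 3 is phrased for multisets; one also has to check the hypotheses of Property 3, namely that every temporary index lies in $[0,N-1]=[0,M-1]$ (an indexed one is $<p\le N-1$, and an unindexed one is $rank^{p}(w)+p\le(|V_2^p|-1)+p=N-1$) and that $|\mathcal N^p_t(w)|\le|\mathcal N(w)|\le N-1=L$. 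Everything else is the routine case check above plus direct appeals to Properties 1--3.
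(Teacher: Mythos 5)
Your proposal is correct and follows essentially the same route as the paper's proof: the same three-way case split on where $s$ falls relative to $\mathcal I(u)$ and $\mathcal I(v)$, with the substantive case handled by the minimality of $u$ at iteration $\mathcal I(u)$ and then restricting neighbor sets back to iteration $s$ via Properties 2 and 3. The only difference is cosmetic—you compose the two restrictions into the single identity $\mathcal N^s(w)=\mathcal N^{\mathcal I(u)}_t(w)\cap[s]$ and invoke Property 3 once, whereas the paper applies Property 3 twice—and your explicit checks (the identity $\mathcal N^p_t(w)\cap[p]=\mathcal N^p(w)$, the hypotheses of Property 3, and the passage from $LO$ comparison to ranks) merely spell out steps the paper leaves implicit.
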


Now we can prove Theorem \ref{index_theorem}.

\begin{theorem}\label{index_theorem}
    Given any undirected graph $G=(V,E)$ with one node indexed $0$. The indexing yielded from Algorithm \ref{index_algorithm} satisfies \eqref{index rest}.
\end{theorem}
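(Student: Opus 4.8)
The plan is to identify every node with the index that $\mathcal I$ eventually assigns to it, so that $V_1^s=[s]$, $V_2^s=\{s,\dots,N-1\}$ and $\mathcal N^s(w)=\mathcal N(w)\cap[s]$ for $w\in V_2^s$. I would first read off the structure of the temporary indices. Property 1 and Lemma~\ref{index_lemma_1} show that $\mathcal I^s$ is non-decreasing on the consecutive integers $V_2^s$, so its level sets are intervals; combined with the definition of $rank^s$ and with $\mathcal I^s(s)=s$ (itself a consequence of Lemma~\ref{index_lemma_1} and Property 1) this gives: $\mathcal I^s(w)=\mathcal I^s(w')$ for $w,w'\in V_2^s$ iff $\mathcal N(w)\cap[s]=\mathcal N(w')\cap[s]$; these ``blocks'' are consecutive intervals of $V_2^s$, the first starting at $s$; and $\mathcal I^s(w)$ is the left endpoint of the block of $w$, so in particular $\mathcal I^s(w)\le w$. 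I would also isolate an elementary ``leading-block'' fact: if two multisets are each split at a threshold $c$ into a part with entries $<c$ and a part with entries $\ge c$, then the $<c$ parts decide the $LO$-comparison and the $\ge c$ parts matter only when the $<c$ parts coincide.

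Now fix $v\in[N-1]\backslash\{0\}$. Since node $v$ is selected as $v^v$ at iteration $v$, $LO(\mathcal N^v_t(v))\le LO(\mathcal N^v_t(v+1))$; the entries $<v$ of $\mathcal N^v_t(v)$ are exactly $\mathcal N(v)\cap[v]$ (likewise for $v+1$), so the leading-block fact---equivalently Property 3 with $m=v$---gives $LO(\mathcal N(v)\cap[v])\le LO(\mathcal N(v+1)\cap[v])$. The sets $\mathcal N(v)\backslash\{v+1\}$ and $\mathcal N(v+1)\backslash\{v\}$ have $<v$ parts $\mathcal N(v)\cap[v]$ and $\mathcal N(v+1)\cap[v]$ respectively, so if the last inequality is strict the leading-block fact immediately gives \eqref{index rest} at $v$.

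It remains to treat the case $\mathcal N(v)\cap[v]=\mathcal N(v+1)\cap[v]$. Then $v$ and $v+1$ lie in the same block at iteration $v$, hence $\mathcal I^v(v)=\mathcal I^v(v+1)=v$, and after cancelling the common entries the inequality $LO(\mathcal N^v_t(v))\le LO(\mathcal N^v_t(v+1))$ becomes $LO(\mathcal I^v(T_v^-))\le LO(\mathcal I^v(T_{v+1}))$, where $T_v^-=\mathcal N(v)\cap\{v+2,\dots,N-1\}$ and $T_{v+1}=\mathcal N(v+1)\cap\{v+2,\dots,N-1\}$; what remains to prove reduces to $LO(T_v^-)\le LO(T_{v+1})$. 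Suppose this fails and set $t=\min(T_v^-\triangle T_{v+1})$; inspecting the first discrepancy forces $t\in T_{v+1}\backslash T_v^-$, so $t\ge v+2$, node $t$ is adjacent to node $v+1$ but not to node $v$, and $T_v^-,T_{v+1}$ agree below $t$. Pushing this through $LO(\mathcal I^v(T_v^-))\le LO(\mathcal I^v(T_{v+1}))$ with the block structure either contradicts that inequality outright or forces a node $t'=\min(\mathcal N(v)\cap\{t+1,\dots,N-1\})$ with $\mathcal I^v(t')=\mathcal I^v(t)$, whence $\mathcal N(t)\cap[v]=\mathcal N(t')\cap[v]$. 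Finally I would examine iteration $t$, where node $t=v^t$: there $t'\in V_2^t$, and since node $v+1$ is a neighbor of $t$, node $v$ is not, node $v$ is a neighbor of $t'$, and $\mathcal N(t)\cap[v]=\mathcal N(t')\cap[v]$, the $<t$ part $\mathcal N(t')\cap[t]$ is $LO$-strictly below $\mathcal N(t)\cap[t]$ (they agree below $v$, then $\mathcal N(t')\cap[t]$ contains $v$ whereas the next entry of $\mathcal N(t)\cap[t]$ is $\ge v+1$, which indeed lies in it). By the leading-block fact $LO(\mathcal N^t_t(t'))<LO(\mathcal N^t_t(t))$, contradicting the minimality defining $v^t$. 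Hence $LO(T_v^-)\le LO(T_{v+1})$, and one more application of the leading-block fact yields \eqref{index rest} at $v$.

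I expect the real difficulty to be this last case: $\mathcal I^v$ is far from injective, so $LO(\mathcal I^v(T_v^-))\le LO(\mathcal I^v(T_{v+1}))$ does not by itself deliver $LO(T_v^-)\le LO(T_{v+1})$, and one is forced to re-invoke the greedy minimality of the algorithm at a later iteration. Pinning down the right later iteration (here $t$) and the right competing node ($t'$), and checking that its temporary neighbor set is genuinely smaller, is the delicate step on which the argument rests.
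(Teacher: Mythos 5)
Your proof is correct, but it closes the argument by a genuinely different mechanism than the paper. Both arguments zoom in on the first place where the neighbor lists of $v$ and $v+1$ disagree, and both lean on the same two ingredients: the greedy minimality of the algorithm's selection and the monotonicity of temporary indices (Lemma~\ref{index_lemma_1} together with Property~3). The paper argues by contradiction on a violating pair $\mathcal I(u)=s$, $\mathcal I(v)=s+1$: at the first differing position it isolates the neighbors $u_k,v_k$ (your $t'$ and $t$), uses the selection of $u$ at iteration $s$ to get $\mathcal I^s(u_k)\le\mathcal I^s(v_k)$, and then derives the contradiction entirely at iteration $s+1$, where the one-step update $\mathcal N^{s+1}(u_k)=\mathcal N^s(u_k)\cup\{s\}$ versus $\mathcal N^{s+1}(v_k)=\mathcal N^s(v_k)$ forces $\mathcal I^{s+1}(u_k)<\mathcal I^{s+1}(v_k)$, against Lemma~\ref{index_lemma_1}. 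You instead split on whether $\mathcal N(v)\cap[v]=\mathcal N(v+1)\cap[v]$; in the tied case you extract $\mathcal I^v(t)=\mathcal I^v(t')$, convert it into $\mathcal N(t)\cap[v]=\mathcal N(t')\cap[v]$ via your block description of $\mathcal I^v$, and defer the contradiction to the later iteration $t$, where the competitor $t'$ (which sees the indexed neighbor $v$ that $t$ lacks) would strictly beat $t$, contradicting the greedy choice there. Both routes are sound; I checked your block-structure claims, the cancellation step, and the dichotomy (outright contradiction when $\mathcal N(v)$ has no neighbor above $t$, else $\mathcal I^v(t')=\mathcal I^v(t)$), and they all hold. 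The paper's version is more economical: one iteration of look-ahead, no block-structure analysis, and no multiset-cancellation lemma (which you assert and would have to prove, though it is elementary and true). What yours buys is a cleaner control of where the discrepancy can sit: your case split confines $t$ and $t'$ to indices above $v+1$, so they are automatically unindexed at every iteration you examine, whereas the paper's uniform treatment of the discrepancy position covers that possibility only implicitly.
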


\begin{proof}
Assume that constraints \eqref{index rest} are not satisfied and the minimal index that violates \eqref{index rest} is $s$. Denote nodes with index $s$ and $s+1$ by $u,v$ respectively (i.e., $\mathcal I(u)=s,\mathcal I(v)=s+1$). 

Let $\mathcal N(u)\backslash \{v\}:=\{u_1,u_2,\dots,u_m\}$ be all neighbors of $u$ except for $v$, where:
\begin{equation*}
    \begin{aligned}
        \mathcal I(u_i)<\mathcal I(u_{i+1}),~\forall i=1,2,\dots,m-1
    \end{aligned}
\end{equation*}
Similarly, let $\mathcal N(v)\backslash \{u\}:=\{v_1,v_2,\dots,v_n\}$ be all neighbors of $v$ except for $u$, where:
\begin{equation*}
    \begin{aligned}
        \mathcal I(v_j)<\mathcal I(v_{j+1}),~\forall j=1,2,\dots,n-1
    \end{aligned}
\end{equation*}
Denote the sequences in $\mathcal S(N,N-1)$ corresponding to sets $\{\mathcal I(u_i)~|~1\le i\le m\}$ and $\{\mathcal I(v_j)~|~1\le j\le n\}$ by $a=(a_1,a_2,\dots,a_{N-1}),b=(b_1,b_2,\dots,b_{N-1})$. By definition of $LO(\cdot)$:
\begin{equation*}
    \begin{aligned}
        a_i=
        \begin{cases}
            \mathcal I(u_i),&1\le i\le m\\
            N,& m<i<N
        \end{cases},~
        b_j=
        \begin{cases}
            \mathcal I(v_j),&1\le j\le n\\
            N,& n<j<N
        \end{cases}
    \end{aligned}
\end{equation*}

Since nodes $u$ and $v$ violate constraints \eqref{index rest}, there exists a position $1\le k\le N-1$ satisfying:
\begin{equation*}
    \begin{aligned}
        \begin{cases}
            a_i=b_i, & \forall 1\le i<k \\
            a_k>b_k & 
        \end{cases}
    \end{aligned}
\end{equation*}
from where we know that nodes $u$ and $v$ share the first $k-1$ neighbors (i.e. $u_i=v_i,~\forall 1\le i<k$). From $b_k<a_k\le N$ we know that node $v$ definitely has its $k$-th neighbor node $v_k$. Also, note that $v_k$ is not a neighbor of node $u$. Otherwise, we have $u_k=v_k$ and then $a_k=\mathcal I(u_k)=\mathcal I(v_k)=b_k$.

\emph{Case 1: } If $a_k=N$, that is, node $u$ has $k-1$ neighbors. 

In this case, node $v$ has all neighbors of node $u$ as well as node $v_k$. Therefore, we have:
\begin{equation*}
    \begin{aligned}
        LO(\mathcal N^s_t(u))>LO(\mathcal N^s_t(v))
    \end{aligned}
\end{equation*}
which violates the fact that node $u$ is chosen to be indexed $s$ at $s$-th iteration of Algorithm \ref{index_algorithm}.

\emph{Case 2: } If $a_k<N$, that is, node $u$ has nodes $u_k$ as its $k$-th neighbor. 

Since $\mathcal I(u_k)=a_k>b_k=\mathcal I(v_k)$, we can apply Lemma \ref{index_lemma_1} on node $u_k$ and $v_k$ at ($s+1$)-th iteration, to obtain
\begin{equation}\label{u_k>=v_k}\tag{$\ge$}
    \begin{aligned}
        \mathcal I^{s+1}(u_k)\ge \mathcal I^{s+1}(v_k)
    \end{aligned}
\end{equation}

Similarly, if we apply Lemma \ref{index_lemma_1} to all the neighbors of node $u$ and node $v$ at $s$-th iteration, we have:
\begin{equation*}
    \begin{aligned}
        \mathcal I^s(u_i)&\le \mathcal I^s(u_{i+1}),~\forall i=1,2,\dots,m-1 \\
        \mathcal I^s(v_j)&\le \mathcal I^s(v_{j+1}),~\forall j=1,2,\dots,n-1 
    \end{aligned}
\end{equation*}
Given that $a_k=\mathcal I(u_k)$ is the $k$-th smallest number in $a$, we conclude that $\mathcal I^s(u_k)$ is equal to the $k$-th smallest number in $\mathcal N^s_t(u)$. Likewise, $\mathcal I^s(v_k)$ equals to the $k$-th smallest number in $\mathcal N^s_t(v)$. Meanwhile, $\mathcal I^s(u_i)=\mathcal I^s(v_i)$ since $u_i=v_i,~\forall 1\le i<k$. After comparing the lexicographical orders between of $\mathcal N^s_t(u)$ and $\mathcal N^s_t(v)$ (with the same $k-1$ smallest elements, $\mathcal I^s(u_k)$ and $\mathcal I^s(v_k)$ as the $k$-th smallest element, respectively), node $u$ is chosen. Therefore, we have:
\begin{equation*}
    \begin{aligned}
        \mathcal I^s(u_k)\le \mathcal I^s(v_k)
    \end{aligned}
\end{equation*}
from which we know that:
\begin{equation*}
    \begin{aligned}
        LO(\mathcal N^s(u_k))\le LO(\mathcal N^s(v_k))
    \end{aligned}
\end{equation*}
At ($s+1$)-th iteration, node $u_k$ has one more indexed neighbor (i.e., node $u$ with index $s$), while node $v_k$ has no new indexed neighbor. Thus we have:
\begin{equation*}
    \begin{aligned}
        LO(\mathcal N^{s+1}(u_k))=LO(\mathcal N^s(u_k)\cup \{s\})< LO(\mathcal N^s(u_k))\le LO(\mathcal N^s(v_k))=LO(\mathcal N^{s+1}(v_k))
    \end{aligned}
\end{equation*}
which yields:
\begin{equation*}\label{u_k<v_k}\tag{$<$}
    \begin{aligned}
        \mathcal I^{s+1}(u_k)< \mathcal I^{s+1}(v_k)
    \end{aligned}
\end{equation*}

The contradiction between \eqref{u_k>=v_k} and \eqref{u_k<v_k} completes this proof.
\end{proof}

Given any undirected graph with node features, after using \eqref{index 0} to choose node $0$, Theorem \ref{index_theorem} guarantees that there exists at least one indexing satisfying \eqref{index rest}. However, when applying \eqref{connectivity} to force a connected graph, we need to show the compatibility between \eqref{connectivity} and \eqref{index rest}, as shown in Lemma \ref{index_lemma_2}.  Appendix \ref{appendix:supplementary} provides proofs of properties and lemmas.

\begin{lemma}\label{index_lemma_2}
    For any undirected, connected graph $G=(V,E)$, if one indexing of $G$ sastifies \eqref{index rest}, then it satisfies \eqref{connectivity}.
\end{lemma}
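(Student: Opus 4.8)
The plan is a proof by contradiction combined with an upward induction on indices. Suppose the given indexing satisfies \eqref{index rest} but violates \eqref{connectivity}; we may assume $N\ge 2$, since for $N=1$ both constraints are vacuous. Let $v^\star\ge 1$ be the smallest index of a node that has no neighbor with strictly smaller index. Since $G$ is connected (and $N\ge2$), every node has at least one neighbor, so $\mathcal N(v^\star)\neq\emptyset$ and $\mathcal N(v^\star)\subseteq\{v^\star+1,\dots,N-1\}$; in particular $v^\star\le N-2$. The core claim I would establish is: for every $w$ with $v^\star\le w\le N-1$, we have $\mathcal N(w)\cap\{0,1,\dots,v^\star-1\}=\emptyset$, i.e.\ no node indexed $\ge v^\star$ is adjacent to any node indexed $<v^\star$. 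Once this is proved, the nonempty index sets $\{0,\dots,v^\star-1\}$ and $\{v^\star,\dots,N-1\}$ have no edge between them, contradicting connectivity and finishing the argument.

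To prove the claim I would induct on $w$. The base case $w=v^\star$ is exactly the defining property of $v^\star$. For the step, assume $\mathcal N(w)\subseteq\{v^\star,\dots,N-1\}$ for some $v^\star\le w\le N-2$, and apply \eqref{index rest} at index $w$: $LO(\mathcal N(w)\backslash\{w+1\})\le LO(\mathcal N(w+1)\backslash\{w\})$. Every element of $\mathcal N(w)\backslash\{w+1\}$ is $\ge v^\star$ (using the inductive hypothesis and that $w\notin\mathcal N(w)$), and the padding value $M=N$ is also $\ge v^\star$ because $v^\star\le N-2$; hence the first entry of the sequence in $\mathcal S(N,N-1)$ associated with $\mathcal N(w)\backslash\{w+1\}$ is $\ge v^\star$. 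Since $LO(a)\le LO(b)$ forces $a_1\le b_1$, the first entry of the sequence associated with $\mathcal N(w+1)\backslash\{w\}$ is also $\ge v^\star$, so (the sequence being non-decreasing) every element of $\mathcal N(w+1)\backslash\{w\}$ is $\ge v^\star$. Adding the index $w\ge v^\star$ back in yields $\mathcal N(w+1)\subseteq\{v^\star,\dots,N-1\}$, completing the induction.

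The remaining work is bookkeeping that I would check carefully: a neighbor set has at most $N-1$ elements, so forming its sequence in $\mathcal S(N,N-1)$ involves no truncation; the empty neighbor set maps to the all-$N$ sequence, so the ``first entry $\ge v^\star$'' reasoning still applies in the degenerate case; and the boundary value $v^\star=N-1$ is excluded up front, since it would force $\mathcal N(v^\star)=\emptyset$, contradicting connectivity. I expect the one step worth isolating as a small observation is the monotonicity fact that $LO(A)\le LO(B)$ implies the smallest element of $A$ (or its padding value, if $A=\emptyset$) is at most the smallest element of $B$ (or its padding value); this is the mechanism by which \eqref{index rest} transmits the ``no low-index neighbor'' property from $w$ to $w+1$, and it is the only nontrivial ingredient beyond the contradiction/cut structure.
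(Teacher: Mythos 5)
Your proof is correct and takes essentially the same route as the paper's: both arguments propagate the absence of neighbors below a fixed threshold along consecutive indices using \eqref{index rest} together with a monotonicity property of $LO$ under restriction to small indices (your ``first entry'' observation is exactly the special case of the paper's Property 3 that its proof invokes with $m=v+1$), and then contradict connectivity. The only differences are cosmetic: you organize it around a minimal index violating \eqref{connectivity} and an edge-free cut, whereas the paper inducts on connectedness of the prefix $\{0,\dots,v\}$ and chains the inequalities up to an explicit node $v'$ that has a crossing edge.
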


Note that \eqref{connectivity} - \eqref{index rest} are not limited to optimizing over trained GNNs. In fact, they can be employed in generic graph-based search problems, as long as there exists a symmetry issue caused by graph isomorphism. GNNs can be generalized to all permutation invariant functions defined over graphs. Although the next sections apply MIP formulations to demonstrate the symmetry-breaking constraints, we could alternatively use a constraint programming \cite{Frisch2002} or satisfiability \cite{Marques2011} paradigm. For example, we could have encoded the \citeauthor{Elgabou2014} \cite{Elgabou2014} constraints in Reluplex \cite{Katz2017}.

\subsection{Connection \& Difference to the symmetry-breaking literature}
Typically, MIP solvers detect symmetry using graph automorphism, for example SCIP \cite{Achterberg2009} uses BLISS \cite{Junttila2007, Junttila2011}, and both Gurobi \cite{Gurobi2023} and SCIP break symmetry using orbital fixing \cite{Kaibel2011} and orbital pruning \cite{Margot2002}. When a MIP solver detects symmetry, the only graph available is the graph formed by the variables and constraints in the MIP.

Our symmetries come from alternative indexing of abstract nodes. Each indexing results in an isomorphic graph. In MIP, however, each indexing corresponds to an element of a much larger symmetry group defined on all variables, including node features ($O(NF)$), adjacency matrix ($O(N^2)$), model (e.g., a GNN), and problem-specific features. For instance, in the first row of Table \ref{result_optimality_QM7}, the input graph has $N = 4$ nodes, but the MIP has $616+411$ variables. We only need to consider a permutation group with $N!=24$ elements. However, because the MIP solver does not have access to the input graph structure, it needs to consider all possible automorphic graphs with $616+411$ nodes. By adding constraints \eqref{connectivity} - \eqref{index rest}, there is no need to consider the symmetry group of all variables to find a much smaller subgroup corresponding to the permutation group defined on abstract nodes. 

The closest setting to ours is distributing $m$ different jobs to $n$ identical machines and then minimizing the total cost. Binary variable $A_{i,j}$ denotes if job $i$ is assigned to machine $j$. The requirement is that each job can only be assigned to one machine (but each machine can be assigned to multiple jobs). Symmetries come from all permutations of machines. This setting appears in noise dosage problems \cite{Sherali2001,Ghoniem2011}, packing and partitioning orbitopes \cite{Kaibel2008,Faenza2009}, and scheduling problems \cite{Ostrowski2010}. However, requiring that the sum of each row in $A_{i,j}$ equals to $1$ simplifies the problem. By forcing decreasing lexicographical orders for all columns, the symmetry issue is handled well. Constraints \eqref{index rest} can be regarded as a non-trivial generalization of these constraints from a bipartite graph to an arbitrary undirected graph: following Algorithm \ref{index_algorithm} will produce the same indexing documented in \cite{Sherali2001,Ghoniem2011,Kaibel2008,Faenza2009,Ostrowski2010}.

\subsection{Mixed-integer formulations for optimizing over trained GNNs}\label{subsec:MIP-GNN}
As mentioned before, there are many variants of GNNs \cite{Bruna2014,Defferrard2016,Kipf2017,Li2018,Zhuang2018,Xu2018, Duvenaud2015,Atwood2016,Niepert2016,Hamilton2017,Velivckovic2017,Monti2017,Gilmer2017,Gao2018,Zhang2018a} with different theoretical and practical motivations. Although it is possible to build a MIP for a specific GNN from the scratch (e.g.,  \cite{Donald2022}), a general definition that supports multiple architectures is preferable.

\subsubsection{Definition of GNNs}\label{subsubsec:definition-GNN}
We define a GNN with $L$ layers as follows:
	\begin{equation*}
		\begin{aligned}
			GNN:\underbrace{\mathbb R^{d_0}\otimes\cdots\otimes\mathbb R^{d_0}}_{|V|\ \rm{times}}\to\underbrace{\mathbb R^{d_L}\otimes\cdots\otimes\mathbb R^{d_L}}_{|V|\ \rm{times}}
		\end{aligned}
	\end{equation*}
where $V$ is the set of nodes of the input graph. 

Let ${\bm x}_v^{(0)} \in \mathbb{R}^{d_0}$ be the input features for node $v$. Then, the $l$-th layer ($l=1,2,\dots,L$) is defined by:
	\begin{equation}\tag{$\star$}\label{def1 of lth layer}
		\begin{aligned}
			{\bm x}_v^{(l)}=\sigma\left(\sum\limits_{u\in\mathcal N(v)\cup\{v\}}{\bm w}_{u\to v}^{(l)}{\bm x}_u^{(l-1)}+{\bm b}_{v}^{(l)}\right),~\forall v\in V
		\end{aligned}
	\end{equation}
where $\mathcal N(v)$ is the set of all neighbors of $v$, $\sigma$ could be identity or any activation function.

With linear aggregate functions such as sum and mean, many classic GNN architectures could be rewritten in form \eqref{def1 of lth layer}, for example: Spectral Network \cite{Bruna2014}, ChebNet \cite{Defferrard2016}, GCN \cite{Kipf2017}, Neural FPs \cite{Duvenaud2015}, DCNN \cite{Atwood2016}, PATCHY-SAN \cite{Niepert2016}, GraphSAGE \cite{Hamilton2017}, and MPNN \cite{Gilmer2017}. 

\subsubsection{Mixed-integer optimization formulations for non-fixed graphs}\label{subsubsec:MIP-GNN}
If the graph structure for inputs is given and fixed, then \eqref{def1 of lth layer} is equivalent to a fully connected layer, whose MIP formulations are already well-established \cite{Anderson2020,Tsay2021}. But if the graph structure is non-fixed (i.e., the elements in adjacency matrix are also decision variables), two issues arise in \eqref{def1 of lth layer}: (1) $\mathcal N(v)$ is not well-defined; (2) ${\bm w}_{u\to v}^{(l)}$ and ${\bm b}_{v}^{(l)}$ may be not fixed and contain the graph's information. Assuming that the weights and biases are constant, we can build two MIP formulations to handle the first issue. 

The first formulation comes from observing that the existence of edge from node $u$ to node $v$ determines the contribution link from ${\bm x}_u^{(l-1)}$ to ${\bm x}_v^{(l)}$. Adding binary variables $e_{u\to v}$ for all $u,v\in V$, we can then formulate the GNNs with bilinear constraints:
\begin{equation}\tag{bilinear}\label{bilinear formulation}
    \begin{aligned}
        {\bm x}_v^{(l)}=\sigma\left(\sum\limits_{u\in V}e_{u\to v}{\bm w}_{u\to v}^{(l)}{\bm x}_u^{(l-1)}+{\bm b}_{v}^{(l)}\right),~\forall v\in V
    \end{aligned}
\end{equation}

Introducing this nonlinearity results in a mixed-integer quadratically constrained optimization problem (MIQCP), which can be handled by state-of-the-art solvers such as Gurobi \cite{Gurobi2023}. 

The second formulation generalizes the big-M formulation for GraphSAGE in \cite{Donald2022} to all GNN architectures satisfying \eqref{def1 of lth layer} and the assumption of constant weights and biases. Instead of using binary variables to directly control the existence of contributions between nodes, auxiliary variables ${\bm z}_{u\to v}^{(l-1)}$ are introduced to represent the contribution from node $u$ to node $v$ in the $l$-th layer:
\begin{equation}\tag{big-M}\label{big-M formulation}
    \begin{aligned}
        {\bm x}_v^{(l)}=\sigma\left(\sum\limits_{u\in V}{\bm w}_{u\to v}^{(l)}{\bm z}_{u\to v}^{(l-1)}+{\bm b}_{v}^{(l)}\right),~\forall v\in V
    \end{aligned}
\end{equation}
where:
\begin{equation*}
    \begin{aligned}
        {\bm z}_{u\to v}^{(l-1)}=\begin{cases}
            0, & e_{u\to v}=0\\
            {\bm x}_u^{(l-1)}, & e_{u\to v}=1
        \end{cases}
    \end{aligned}
\end{equation*}
Assuming that each feature is bounded, the definition of ${\bm z}_{u\to v}^{(l-1)}$ can be reformulated using big-M:
\begin{equation*}
    \begin{aligned}
        {\bm x}_{u}^{(l-1)}-{\bm M}_{u}^{(l-1)}(1-e_{u\to v})\le &{\bm z}_{u\to v}^{(l-1)}\le {\bm x}_{u}^{(l-1)}+{\bm M}_{u}^{(l-1)}(1-e_{u\to v})\\
        -{\bm M}_{u}^{(l-1)}e_{u\to v}\le &{\bm z}_{u\to v}^{(l-1)}\le {\bm M}_u^{(l-1)}e_{u\to v}
    \end{aligned}
\end{equation*}
where $|{\bm x}_u^{(l-1)}|\le {\bm M}_u^{(l-1)}, e_{u\to v}\in\{0,1\}$. By adding extra continuous variables and constraints, as well as utilizing the bounds for all features, the big-M formulation replaces the bilinear constraints with linear constraints. Section \ref{sec:experiment} numerically compares these two formulations.

\section{Computational experiments}\label{sec:experiment}
We performed all experiments on a 3.2 GHz Intel Core i7-8700 CPU with 16 GB memory. GNNs are implemented and trained in \emph{PyG} \cite{Fey2019}. MIP formulations for GNNs and CAMD are implemented based on OMLT \cite{Ceccon2022}, and are optimized by Gurobi 10.0.1 \cite{Gurobi2023} with default relative MIP optimality gap (i.e., $10^{-4}$). The code is available at \href{https://github.com/cog-imperial/GNN_MIP_CAMD}{https://github.com/cog-imperial/GNN\_MIP\_CAMD}. These are all engineering choices and we could have, for example, extended software other than OMLT to translate the trained GNNs into an optimization solver \cite{Bergman2022,reluMIP2021}.

\subsection{Mixed-integer optimization formulation for molecular design}\label{subsec:MIP-CAMD}
MIP formulations are well-established in the CAMD literature \cite{Odele1993,Churi1996,Camarda1999,Sinha1999,Sahinidis2003,Zhang2015}. The basic idea is representing a molecule as a graph, creating variables for each atom (or group of atoms), using constraints to preserve graph structure and satisfy chemical requirements. A score function is usually given as the optimization target. Instead of using knowledge from experts or statistics to build the score function, GNNs (or other ML models) could be trained from datasets to replace score functions. Here we provide a general and flexible framework following the MIP formulation for CAMD in \cite{Donald2022}. Moreover, by adding meaningful bounds and breaking symmetry, our formulation could generate more reasonable molecules with less redundancy. Due to space limitation, we briefly introduce the formulation here (see Appendix \ref{appendix:MIP-CAMD} for the full MIP formulation).

To design a molecule with $N$ atoms, we define $N\times F$ binary variables $X_{v,f},v\in [N],f\in [F]$ to represent $F$ features for $N$ atoms. Hydrogen atoms are not counted in since they can implicitly considered as node features. Features include types of atom,
 number of neighbors, number of hydrogen atoms associated, and types of adjacent bonds. For graph structure of molecules, we add three sets of binary variables $A_{u,v},DB_{u,v},TB_{u,v}$ to denote the type of bond (i.e., any bond, double bond, and triple bond) between atom $u$ and atom $v$, where $A$ is the adjacency matrix.

To design reasonable molecules, constraints \eqref{C1} - \eqref{C21} handle structural feasibility following \cite{Donald2022}. Additionally, we propose new constraints to bound the number of each type of atoms \eqref{C22}, double bonds \eqref{C23}, triple bonds \eqref{C24}, and rings \eqref{C25}. In our experiments, we calculate these bounds based on the each dataset itself so that each molecule in the dataset will satisfy these bounds (see Appendix \ref{appendix:dataset} for details). By setting proper bounds, we can control the composition of the molecule, and avoid extreme cases such as all atoms being set to oxygen, or a molecule with too many rings or double/triple bounds. In short, constraints \eqref{C22} - \eqref{C25} provide space for chemical expertise and practical requirements. Furthermore, our formulation could be easily applied to datasets with different types of atoms by only changing the parameters in Appendix \ref{appendix:MIP-CAMD-parameters}. Moreover, group representation of molecules \cite{Odele1993,Sahinidis2003} is also compatible with this framework. The advantage is that all constraints can be reused without any modification. 

Among constraints \eqref{C1} - \eqref{C25} (as shown in Appendix \ref{appendix:MIP-CAMD-constraints}), constraints \eqref{C5} are the realization of \eqref{connectivity}. Except for \eqref{C5}, these structural constraints are independent of the graph indexing. Therefore, we can compatibly implement constraints \eqref{index 0} and  \eqref{index rest} to break symmetry.

Corresponding to \eqref{index 0}, we add the following constraints over features:
\begin{equation}\label{C26}\tag{C26}
    \begin{aligned}
        \sum\limits_{f\in [F]}2^{F-f-1}\cdot X_{0,f}\le \sum\limits_{f\in [F]} 2^{F-f-1}\cdot X_{v,f},~\forall v\in [N]\backslash\{0\}
    \end{aligned}
\end{equation}
where $2^{F-f-1},f\in [F]$ are coefficients to help build a bijective $h$ between all possible features and all integers in $[0,2^F-1]$. These coefficients are also called "universal ordering vector" in \cite{Friedman2007,Hojny2019}.

On graph level, constraints \eqref{index rest} can be equivalently rewritten as:
\begin{equation}\label{C27}\tag{C27}
    \begin{aligned}
        \sum\limits_{u\neq v,v+1}2^{N-u-1}\cdot A_{u,v} \ge \sum\limits_{u\neq v,v+1}2^{N-u-1}\cdot A_{u,v+1},~\forall v\in [N-1]\backslash\{0\}
    \end{aligned}
\end{equation}
Similarly, the coefficients $2^{N-u-1},u\in [N]$ are used to build a bijective mapping between all possible sets of neighbors and all integers in $[0,2^N-1]$.

For illustration purposes, we can view CAMD as two separate challenges, where the first one uses structural constraints to design reasonable molecules (including all symmetric solutions), and the second one uses symmetry-breaking constraints to remove symmetric solutions. Note that the diversity of solutions will not change after breaking symmetry, because each molecule corresponds to at least one solution (guaranteed by Section \ref{subsec:theory}).

\subsection{Counting feasible structures: performance of symmetry-breaking}\label{subsec:count feasible}
We choose two datasets QM7 \cite{Blum2009,Rupp2012} and QM9 \cite{Ruddigkeit2012,Ramakrishnan2014} from CAMD literature to test the proposed methods. See Appendix \ref{appendix:dataset} for more information about QM7 and QM9. To test the efficiency of our symmetry-breaking techniques, we build a MIP formulation for CAMD and count all feasible structures for different $N$. By setting \textrm{PoolSearchMode=$2$, PoolSolutions=$10^9$}, Gurobi can find many (up to $10^9$) feasible solutions to fill in the solution pool. 

Table \ref{count feasible} shows the performance of our symmetry-breaking constraints \eqref{index 0} and \eqref{index rest} comparing to the baseline of \eqref{connectivity}. Without adding \eqref{connectivity}, we need to count the number of any graph with compatible features. Even ignoring features, the baseline would be $2^{\frac{N(N-1)}{2}}$. This number will be much larger after introducing features, which loses the meaning as a baseline, so we use \eqref{connectivity} as the baseline.

\begin{table}[htp] 
    \caption{Numbers of feasible solutions. The time limit is $48$ hours. At least $2.5 \times 10^6$ solutions are found for each time out (t.o.). For each dataset, the last column reports the percentage of removed symmetric solutions after adding \eqref{index 0} and \eqref{index rest} to the baseline of \eqref{connectivity}. Higher percentage means breaking more symmetries.}
    \label{count feasible}
    \centering
    \vspace{1mm}
    \begin{tabular}{crrrrrrrr}
    \toprule
    \multirow{2}{*}{$N$} & \multicolumn{4}{c}{QM7} & \multicolumn{4}{c}{QM9} \\
        & \eqref{connectivity} & \eqref{connectivity} - \eqref{index 0} & \eqref{connectivity} - \eqref{index rest} & $(\%)$ & \eqref{connectivity} & \eqref{connectivity} - \eqref{index 0} & \eqref{connectivity} - \eqref{index rest} & $(\%)$\\
    \midrule
    $2$ & $17$ & $10$ & $10$ & $41$ & $15$ & $9$ & $9$ & $40$\\
    $3$ & $112$ & $37$ & $37$ & $67$ & $175$ & $54$ & $54$ & $69$\\
    $4$ & $3,323$ & $726$ & $416$ & $87$ & $4,536$ & $1,077$ & $631$ & $86$\\
    $5$ & $67,020$ & $11,747$ & $3,003$ & $96$ & $117,188$ & $21,441$ & $5,860$ & $95$\\
    $6$ & $t.o.$ & $443,757$ & $50,951$ & $\ge 98$ & $t.o.$ & $527,816$ & $59,492$ & $\ge 98$ \\
    $7$ & $t.o.$ & $t.o.$ & $504,952$ & $*$ & $t.o.$ & $t.o.$ & $776,567$ & $*$ \\
    $8$ & $t.o.$ & $t.o.$ & $t.o.$ & $*$ & $t.o.$ & $t.o.$ & $t.o.$ & $*$ \\
    \bottomrule 
    \end{tabular}
\end{table}

\subsection{Optimizing over trained GNNs for molecular design}\label{subsec:optimality}
For each dataset, we train a GNN with two GraphSAGE layers followed by an add pooling layer, and three dense layers. Details about their structures and training process are shown in Appendix \ref{appendix:training}. For statistical consideration, we train $10$ models with different random seeds and use the $5$ models with smaller losses for optimization. Given $N$ ($\in\{4,5,6,7,8\}$), and a formulation ($\in\{$bilinear, bilinear+BrS, big-M, big-M+BrS$\}$), where "+BrS" means adding symmetry-breaking constraints \eqref{C26} and \eqref{C27}, we solve the corresponding optimization problem $10$ times with different random seeds in Gurobi. This means that there are $50$ runs for each $N$ and formulation. 

\begin{figure}[htp]
    \centering
    \includegraphics[width=0.49\textwidth]{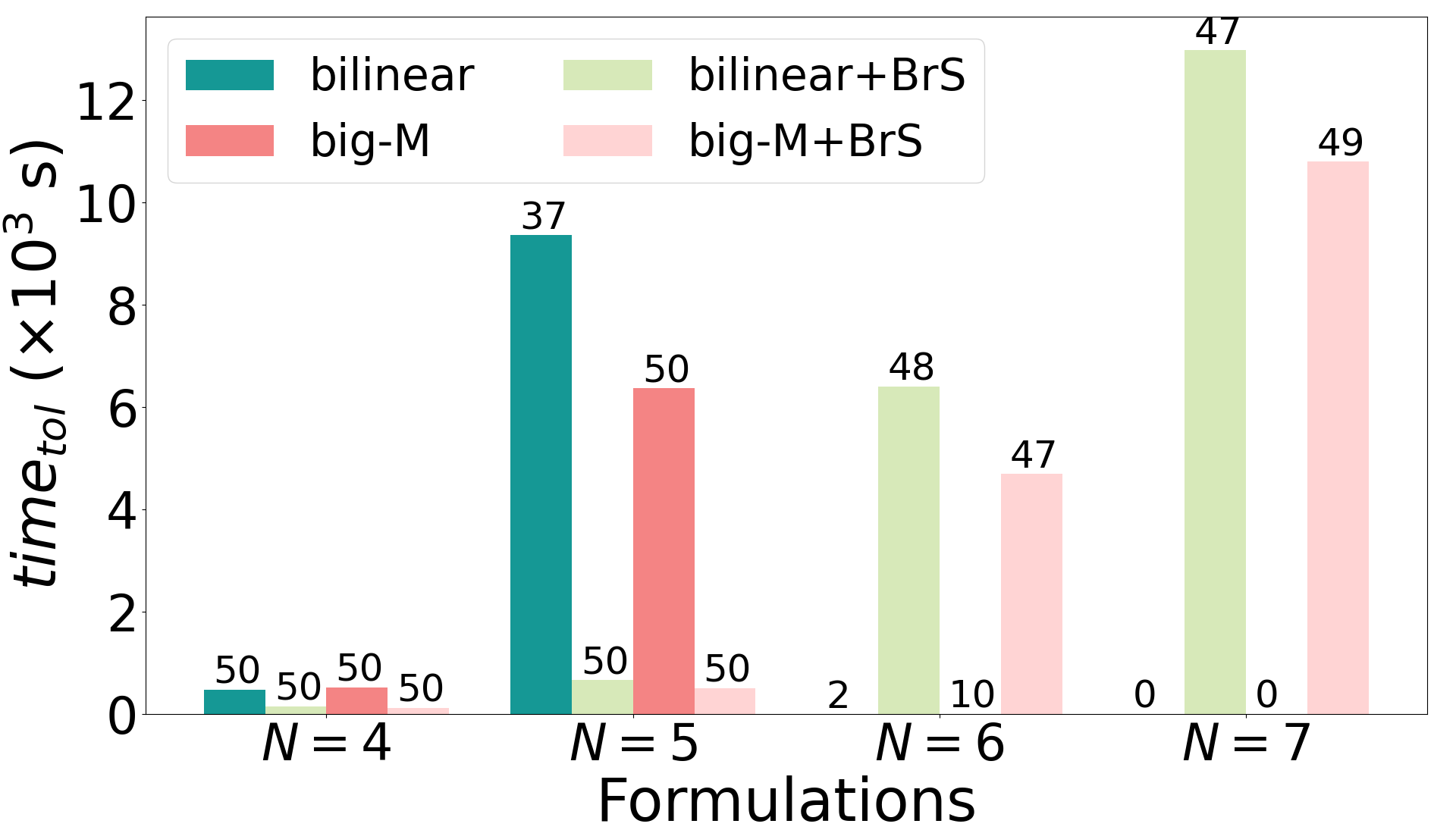}
    \includegraphics[width=0.49\textwidth]{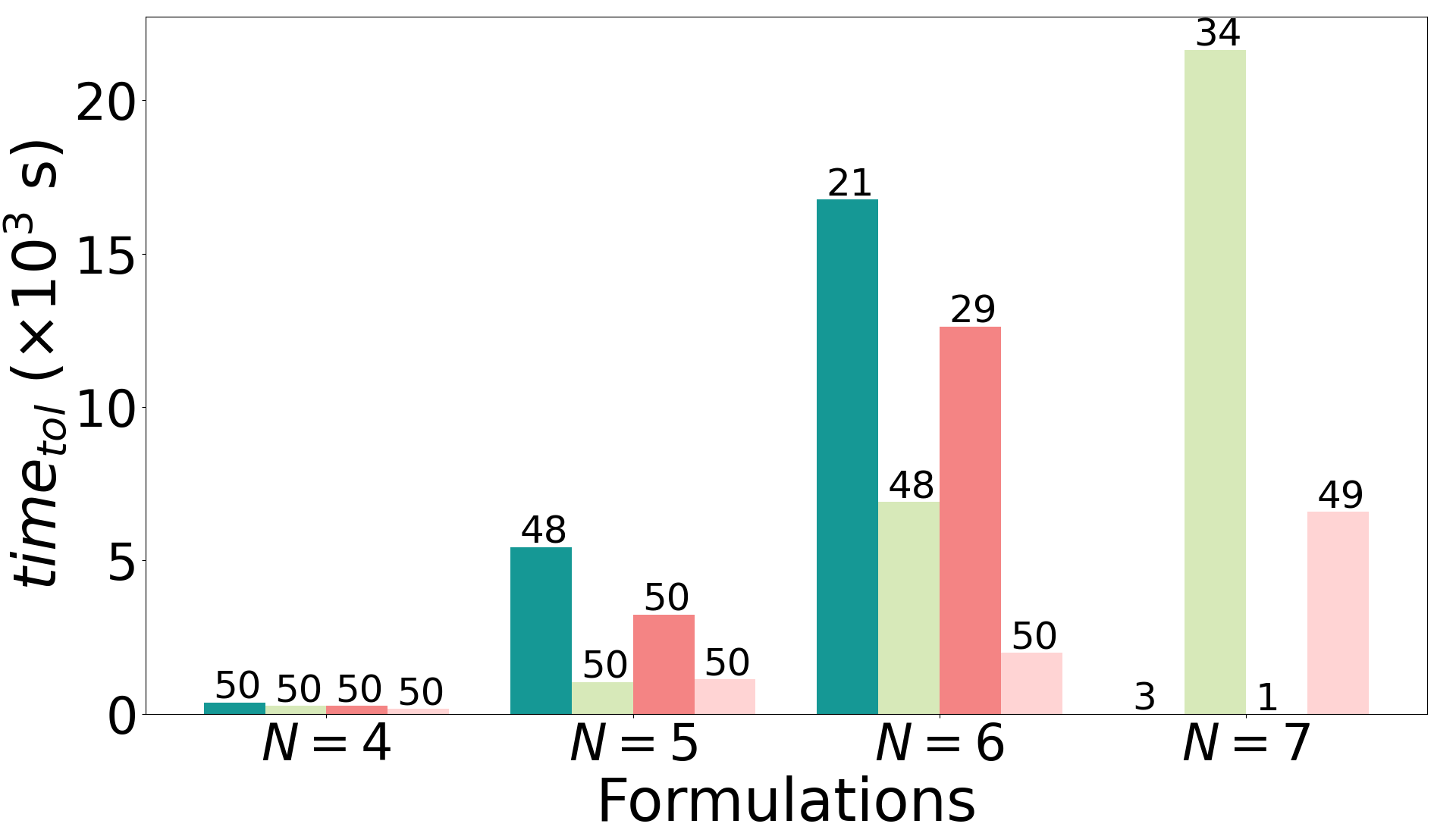}
    \caption{These graphs report the time $time_{tol}$ to achieve relative MIP optimality gap $10^{-4}$ averaged over the number of successful runs among $50$ runs. We plot a bar for each formulation at different values of $N$. The left- and right-hand graphs correspond to datasets QM7 and QM9, respectively. The number of runs that achieve optimality is shown above each bar. Results with fewer than $20$ successful runs do not have a bar because so few runs terminated. The time limit is $10$ hours.}
    \label{results_optimality}
\end{figure}

Figure \ref{results_optimality} shows the significant improvement of symmetry-breaking. Considering the average solving time, big-M performs better. For the bilinear constraints, Gurobi 10.0.1 appears to transform bilinear constraints into linear constraints. Appendix \ref{appendix:results} shows that, after Gurobi's presolve stage, the big-M formulation has more continuous variables but fewer binary variables compared to the bilinear formulation. The fewer binary variables after presolve may explain the better big-M performance.

In addition to the average solving time, we compare the performance of two formulations with breaking symmetry in each run. In $341$ runs out of $500$ runs (i.e., $50$ runs for each dataset $\in\{$QM7, QM9$\}$ with each $N\in\{4,5,6,7,8\}$), the big-M formulation achieves optimality faster than the bilinear formulation. Additionally, Gurobi uses much of the solving time to improve the bounds. Table \ref{result_optimality_QM7} and \ref{result_optimality_QM9} in Appendix \ref{appendix:results} report $time_{opt}$ that denotes the first time to find the optimal solution. In $292$ runs, the big-M formulation finds the optimal solution earlier than the bilinear formulation.

\section{Discussion \& Conclusion}\label{sec:conclusion}
We introduce optimization over trained GNNs and propose constraints to break symmetry. We prove that there exists at least one indexing (resulting from Algorithm \ref{index_algorithm}) satisfying these constraints. Numerical results show the significant improvement after breaking symmetry. These constraints are not limited to the problem (i.e., optimizing trained GNNs), technique (i.e., MIP), and application (i.e., CAMD) used in this work. For example, one can incorporate them into genetic algorithms instead of MIP, or replace the GNN by artificially-designed score functions or other ML models. In other graph-based decision-making problems, as long as the symmetry issue caused by graph isomorphism exists, these constraints could be used to break symmetry. Moreover, the proposed frameworks for building MIP for GNNs as well as CAMD provide generality and flexibility for more problems.

\textbf{Limitations.} Assuming constant weights and biases excludes some GNN types. This limitation is an artifact of our framework and it is possible to build MIP formulations for some other architectures. Using MIP may limit GNN size, for instance, edge features may enlarge the optimization problem.


\textbf{Future work.} One direction is to make the MIP-GNN framework more general, such as adding edge features, supporting more GNN architectures, and developing more efficient formulations. Another direction is using the symmetry-breaking constraints in graph-based applications beyond trained GNNs because we can consider any function that is invariant to graph isomorphism. To facilitate further CAMD applications, more features such as aromacity and formal charge could be incorporated. Also, optimizing lead structures in a defined chemical space is potential to design large molecules.

\section*{Acknowledgements}
This work was supported by the Engineering and Physical Sciences Research Council [grant numbers EP/W003317/1 and EP/T001577/1], an Imperial College Hans Rausing PhD Scholarship to SZ, a BASF/RAEng Research Chair in Data-Driven Optimisation to RM, and an Imperial College Research Fellowship to CT.

We really appreciate the comments and suggestions from reviewers and meta-reviewers during the peer review process, which were very helpful to clarify and improve this paper. We also thank Guoliang Wang for the discussion about graph indexing.
\newpage

\bibliographystyle{unsrtnat}
\bibliography{ref}

\begin{thebibliography}{107}
\providecommand{\natexlab}[1]{#1}
\providecommand{\url}[1]{\texttt{#1}}
\expandafter\ifx\csname urlstyle\endcsname\relax
  \providecommand{\doi}[1]{doi: #1}\else
  \providecommand{\doi}{doi: \begingroup \urlstyle{rm}\Url}\fi

\bibitem[Zhou et~al.(2020)Zhou, Cui, Hu, Zhang, Yang, Liu, Wang, Li, and
  Sun]{Zhou2020}
Jie Zhou, Ganqu Cui, Shengding Hu, Zhengyan Zhang, Cheng Yang, Zhiyuan Liu,
  Lifeng Wang, Changcheng Li, and Maosong Sun.
\newblock Graph neural networks: A review of methods and applications.
\newblock \emph{AI Open}, 1:\penalty0 57--81, 2020.

\bibitem[Wu et~al.(2020{\natexlab{a}})Wu, Pan, Chen, Long, Zhang, and
  Philip]{Wu2020a}
Zonghan Wu, Shirui Pan, Fengwen Chen, Guodong Long, Chengqi Zhang, and S.~Yu
  Philip.
\newblock A comprehensive survey on graph neural networks.
\newblock \emph{IEEE Transactions on Neural Networks and Learning Systems},
  32\penalty0 (1):\penalty0 4--24, 2020{\natexlab{a}}.

\bibitem[Chami et~al.(2022)Chami, Abu-El-Haija, Perozzi, R{\'e}, and
  Murphy]{Chami2022}
Ines Chami, Sami Abu-El-Haija, Bryan Perozzi, Christopher R{\'e}, and Kevin
  Murphy.
\newblock Machine learning on graphs: A model and comprehensive taxonomy.
\newblock \emph{Journal of Machine Learning Research}, 23\penalty0
  (89):\penalty0 1--64, 2022.

\bibitem[Bruna et~al.(2014)Bruna, Zaremba, Szlam, and Lecun]{Bruna2014}
Joan Bruna, Wojciech Zaremba, Arthur Szlam, and Yann Lecun.
\newblock Spectral networks and locally connected networks on graphs.
\newblock In \emph{ICLR}, 2014.

\bibitem[Defferrard et~al.(2016)Defferrard, Bresson, and
  Vandergheynst]{Defferrard2016}
Micha{\"e}l Defferrard, Xavier Bresson, and Pierre Vandergheynst.
\newblock Convolutional neural networks on graphs with fast localized spectral
  filtering.
\newblock In \emph{NIPS}, 2016.

\bibitem[Kipf and Welling(2017)]{Kipf2017}
Thomas~N. Kipf and Max Welling.
\newblock Semi-supervised classification with graph convolutional networks.
\newblock In \emph{ICLR}, 2017.

\bibitem[Li et~al.(2018)Li, Wang, Zhu, and Huang]{Li2018}
Ruoyu Li, Sheng Wang, Feiyun Zhu, and Junzhou Huang.
\newblock Adaptive graph convolutional neural networks.
\newblock In \emph{AAAI}, 2018.

\bibitem[Zhuang and Ma(2018)]{Zhuang2018}
Chenyi Zhuang and Qiang Ma.
\newblock Dual graph convolutional networks for graph-based semi-supervised
  classification.
\newblock In \emph{WWW}, 2018.

\bibitem[Xu et~al.(2019)Xu, Shen, Cao, Qiu, and Cheng]{Xu2018}
Bingbing Xu, Huawei Shen, Qi~Cao, Yunqi Qiu, and Xueqi Cheng.
\newblock Graph wavelet neural network.
\newblock In \emph{ICLR}, 2019.

\bibitem[Duvenaud et~al.(2015)Duvenaud, Maclaurin, Iparraguirre, Bombarell,
  Hirzel, Aspuru-Guzik, and Adams]{Duvenaud2015}
David~K. Duvenaud, Dougal Maclaurin, Jorge Iparraguirre, Rafael Bombarell,
  Timothy Hirzel, Al{\'a}n Aspuru-Guzik, and Ryan~P. Adams.
\newblock Convolutional networks on graphs for learning molecular fingerprints.
\newblock In \emph{NIPS}, 2015.

\bibitem[Atwood and Towsley(2016)]{Atwood2016}
James Atwood and Don Towsley.
\newblock Diffusion-convolutional neural networks.
\newblock In \emph{NIPS}, 2016.

\bibitem[Niepert et~al.(2016)Niepert, Ahmed, and Kutzkov]{Niepert2016}
Mathias Niepert, Mohamed Ahmed, and Konstantin Kutzkov.
\newblock Learning convolutional neural networks for graphs.
\newblock In \emph{ICML}, 2016.

\bibitem[Hamilton et~al.(2017)Hamilton, Ying, and Leskovec]{Hamilton2017}
Will Hamilton, Zhitao Ying, and Jure Leskovec.
\newblock Inductive representation learning on large graphs.
\newblock In \emph{NIPS}, 2017.

\bibitem[Veli{\v{c}}kovi{\'c} et~al.(2017)Veli{\v{c}}kovi{\'c}, Cucurull,
  Casanova, Romero, Lio, and Bengio]{Velivckovic2017}
Petar Veli{\v{c}}kovi{\'c}, Guillem Cucurull, Arantxa Casanova, Adriana Romero,
  Pietro Lio, and Yoshua Bengio.
\newblock Graph attention networks.
\newblock In \emph{ICLR}, 2017.

\bibitem[Monti et~al.(2017)Monti, Boscaini, Masci, Rodola, Svoboda, and
  Bronstein]{Monti2017}
Federico Monti, Davide Boscaini, Jonathan Masci, Emanuele Rodola, Jan Svoboda,
  and Michael~M. Bronstein.
\newblock Geometric deep learning on graphs and manifolds using mixture model
  {CNN}s.
\newblock In \emph{CVPR}, 2017.

\bibitem[Gilmer et~al.(2017)Gilmer, Schoenholz, Riley, Vinyals, and
  Dahl]{Gilmer2017}
Justin Gilmer, Samuel~S. Schoenholz, Patrick~F. Riley, Oriol Vinyals, and
  George~E. Dahl.
\newblock Neural message passing for quantum chemistry.
\newblock In \emph{ICML}, 2017.

\bibitem[Gao et~al.(2018)Gao, Wang, and Ji]{Gao2018}
Hongyang Gao, Zhengyang Wang, and Shuiwang Ji.
\newblock Large-scale learnable graph convolutional networks.
\newblock In \emph{Knowledge Discovery \& Data Mining}, 2018.

\bibitem[Zhang et~al.(2018{\natexlab{a}})Zhang, Shi, Xie, Ma, King, and
  Yeung]{Zhang2018a}
Jiani Zhang, Xingjian Shi, Junyuan Xie, Hao Ma, Irwin King, and Dit-Yan Yeung.
\newblock {GaAN}: Gated attention networks for learning on large and
  spatiotemporal graphs.
\newblock In \emph{UAI}, 2018{\natexlab{a}}.

\bibitem[Shindo and Matsumoto(2019)]{Shindo2019}
Hiroyuki Shindo and Yuji Matsumoto.
\newblock Gated graph recursive neural networks for molecular property
  prediction.
\newblock \emph{arXiv preprint arXiv:1909.00259}, 2019.

\bibitem[Wang et~al.(2019)Wang, Li, Jiang, Wang, Zhang, and Wei]{Wang2019}
Xiaofeng Wang, Zhen Li, Mingjian Jiang, Shuang Wang, Shugang Zhang, and
  Zhiqiang Wei.
\newblock Molecule property prediction based on spatial graph embedding.
\newblock \emph{Journal of Chemical Information and Modeling}, 59\penalty0
  (9):\penalty0 3817--3828, 2019.

\bibitem[Xu et~al.(2017)Xu, Pei, and Lai]{Xu2017}
Youjun Xu, Jianfeng Pei, and Luhua Lai.
\newblock Deep learning based regression and multiclass models for acute oral
  toxicity prediction with automatic chemical feature extraction.
\newblock \emph{Journal of Chemical Information and Modeling}, 57\penalty0
  (11):\penalty0 2672--2685, 2017.

\bibitem[Withnall et~al.(2020)Withnall, Lindel{\"o}f, Engkvist, and
  Chen]{Withnall2020}
Michael Withnall, Edvard Lindel{\"o}f, Ola Engkvist, and Hongming Chen.
\newblock Building attention and edge message passing neural networks for
  bioactivity and physical--chemical property prediction.
\newblock \emph{Journal of Cheminformatics}, 12\penalty0 (1):\penalty0 1--18,
  2020.

\bibitem[Schweidtmann et~al.(2020)Schweidtmann, Rittig, K{\" o}nig, Grohe,
  Mitsos, and Dahmen]{Schweidtmann2020}
Artur~M. Schweidtmann, Jan~G. Rittig, Andrea K{\" o}nig, Martin Grohe,
  Alexander Mitsos, and Manuel Dahmen.
\newblock Graph neural networks for prediction of fuel ignition quality.
\newblock \emph{Energy \& Fuels}, 34\penalty0 (9):\penalty0 11395--11407, 2020.

\bibitem[Ying et~al.(2018)Ying, You, Morris, Ren, Hamilton, and
  Leskovec]{Ying2018}
Zhitao Ying, Jiaxuan You, Christopher Morris, Xiang Ren, Will Hamilton, and
  Jure Leskovec.
\newblock Hierarchical graph representation learning with differentiable
  pooling.
\newblock In \emph{NIPS}, 2018.

\bibitem[Zhang et~al.(2019)Zhang, Liu, Li, and Wu]{Zhang2019}
Xiaotong Zhang, Han Liu, Qimai Li, and Xiao-Ming Wu.
\newblock Attributed graph clustering via adaptive graph convolution.
\newblock In \emph{IJCAI}, 2019.

\bibitem[Peng et~al.(2018)Peng, Li, He, Liu, Bao, Wang, Song, and
  Yang]{Peng2018}
Hao Peng, Jianxin Li, Yu~He, Yaopeng Liu, Mengjiao Bao, Lihong Wang, Yangqiu
  Song, and Qiang Yang.
\newblock Large-scale hierarchical text classification with recursively
  regularized deep graph-{CNN}.
\newblock In \emph{WWW}, 2018.

\bibitem[Yao et~al.(2019)Yao, Mao, and Luo]{Yao2019}
Liang Yao, Chengsheng Mao, and Yuan Luo.
\newblock Graph convolutional networks for text classification.
\newblock In \emph{AAAI}, 2019.

\bibitem[Wu et~al.(2019)Wu, Zhang, Gao, He, Weng, Gao, and Chen]{Wu2019}
Qitian Wu, Hengrui Zhang, Xiaofeng Gao, Peng He, Paul Weng, Han Gao, and Guihai
  Chen.
\newblock Dual graph attention networks for deep latent representation of
  multifaceted social effects in recommender systems.
\newblock In \emph{WWW}, 2019.

\bibitem[Fan et~al.(2019)Fan, Ma, Li, He, Zhao, Tang, and Yin]{Fan2019}
Wenqi Fan, Yao Ma, Qing Li, Yuan He, Eric Zhao, Jiliang Tang, and Dawei Yin.
\newblock Graph neural networks for social recommendation.
\newblock In \emph{WWW}, 2019.

\bibitem[Gani(2004)]{Gani2004}
Rafiqul Gani.
\newblock Computer-aided methods and tools for chemical product design.
\newblock \emph{Chemical Engineering Research and Design}, 82\penalty0
  (11):\penalty0 1494--1504, 2004.

\bibitem[Ng et~al.(2014)Ng, Chong, and Chemmangattuvalappil]{Ng2014}
Lik~Yin Ng, Fah~Keen Chong, and Nishanth~G. Chemmangattuvalappil.
\newblock Challenges and opportunities in computer aided molecular design.
\newblock \emph{Computer Aided Chemical Engineering}, 34:\penalty0 25--34,
  2014.

\bibitem[Austin et~al.(2016)Austin, Sahinidis, and Trahan]{Austin2016}
Nick~D. Austin, Nikolaos~V. Sahinidis, and Daniel~W. Trahan.
\newblock Computer-aided molecular design: An introduction and review of tools,
  applications, and solution techniques.
\newblock \emph{Chemical Engineering Research and Design}, 116:\penalty0 2--26,
  2016.

\bibitem[Elton et~al.(2019)Elton, Boukouvalas, Fuge, and Chung]{Elton2019}
Daniel~C. Elton, Zois Boukouvalas, Mark~D. Fuge, and Peter~W. Chung.
\newblock Deep learning for molecular design — a review of the state of the
  art.
\newblock \emph{Molecular Systems Design \& Engineering}, 4\penalty0
  (4):\penalty0 828--849, 2019.

\bibitem[Alshehri et~al.(2020)Alshehri, Gani, and You]{Alshehri2020}
Abdulelah~S. Alshehri, Rafiqul Gani, and Fengqi You.
\newblock Deep learning and knowledge-based methods for computer-aided
  molecular design — toward a unified approach: State-of-the-art and future
  directions.
\newblock \emph{Computers \& Chemical Engineering}, 141:\penalty0 107005, 2020.

\bibitem[Faez et~al.(2021)Faez, Ommi, Baghshah, and Rabiee]{Faez2021}
Faezeh Faez, Yassaman Ommi, Mahdieh~Soleymani Baghshah, and Hamid~R. Rabiee.
\newblock Deep graph generators: A survey.
\newblock \emph{IEEE Access}, 9:\penalty0 106675--106702, 2021.

\bibitem[Gao et~al.(2022)Gao, Fu, Sun, and Coley]{Gao2022}
Wenhao Gao, Tianfan Fu, Jimeng Sun, and Connor~W. Coley.
\newblock Sample efficiency matters: A benchmark for practical molecular
  optimization.
\newblock In \emph{NIPS Track Datasets and Benchmarks}, 2022.

\bibitem[Xia et~al.(2019)Xia, Hu, Wang, Zhang, and Liu]{Xia2019}
Xiaolin Xia, Jianxing Hu, Yanxing Wang, Liangren Zhang, and Zhenming Liu.
\newblock Graph-based generative models for \textit{de Novo} drug design.
\newblock \emph{Drug Discovery Today: Technologies}, 32:\penalty0 45--53, 2019.

\bibitem[Yang et~al.(2019)Yang, Swanson, Jin, Coley, Eiden, Gao, Guzman-Perez,
  Hopper, Kelley, Mathea, et~al.]{Yang2019}
Kevin Yang, Kyle Swanson, Wengong Jin, Connor Coley, Philipp Eiden, Hua Gao,
  Angel Guzman-Perez, Timothy Hopper, Brian Kelley, Miriam Mathea, et~al.
\newblock Analyzing learned molecular representations for property prediction.
\newblock \emph{Journal of Chemical Information and Modeling}, 59\penalty0
  (8):\penalty0 3370--3388, 2019.

\bibitem[Xiong et~al.(2021)Xiong, Xiong, Chen, Jiang, and Zheng]{Xiong2021}
Jiacheng Xiong, Zhaoping Xiong, Kaixian Chen, Hualiang Jiang, and Mingyue
  Zheng.
\newblock Graph neural networks for automated \textit{de novo} drug design.
\newblock \emph{Drug Discovery Today}, 26\penalty0 (6):\penalty0 1382--1393,
  2021.

\bibitem[Rittig et~al.(2022)Rittig, Ritzert, Schweidtmann, Winkler, Weber,
  Morsch, Heufer, Grohe, Mitsos, and Dahmen]{Rittig2022}
Jan~G. Rittig, Martin Ritzert, Artur~M. Schweidtmann, Stefanie Winkler, Jana~M.
  Weber, Philipp Morsch, Karl~Alexander Heufer, Martin Grohe, Alexander Mitsos,
  and Manuel Dahmen.
\newblock Graph machine learning for design of high-octane fuels.
\newblock \emph{AIChE Journal}, page e17971, 2022.

\bibitem[Szegedy et~al.(2014)Szegedy, Zaremba, Sutskever, Bruna, Erhan,
  Goodfellow, and Fergus]{Szegedy2014}
Christian Szegedy, Wojciech Zaremba, Ilya Sutskever, Joan Bruna, Dumitru Erhan,
  Ian Goodfellow, and Rob Fergus.
\newblock Intriguing properties of neural networks.
\newblock In \emph{ICLR}, 2014.

\bibitem[Wu et~al.(2020{\natexlab{b}})Wu, Say, and Sanner]{Wu2020b}
Ga~Wu, Buser Say, and Scott Sanner.
\newblock Scalable planning with deep neural network learned transition models.
\newblock \emph{Journal of Artificial Intelligence Research}, 68:\penalty0
  571--606, 2020{\natexlab{b}}.

\bibitem[Bunel et~al.(2020{\natexlab{a}})Bunel, Hinder, Bhojanapalli, and
  Dvijotham]{Bunel2020a}
Rudy~R. Bunel, Oliver Hinder, Srinadh Bhojanapalli, and Krishnamurthy
  Dvijotham.
\newblock An efficient nonconvex reformulation of stagewise convex optimization
  problems.
\newblock \emph{NIPS}, 2020{\natexlab{a}}.

\bibitem[Horvath et~al.(2021)Horvath, Muguruza, and Tomas]{Horvath2021}
Blanka Horvath, Aitor Muguruza, and Mehdi Tomas.
\newblock Deep learning volatility: A deep neural network perspective on
  pricing and calibration in (rough) volatility models.
\newblock \emph{Quantitative Finance}, 21\penalty0 (1):\penalty0 11--27, 2021.

\bibitem[Croce et~al.(2019)Croce, Andriushchenko, and Hein]{Croce2019}
Francesco Croce, Maksym Andriushchenko, and Matthias Hein.
\newblock Provable robustness of relu networks via maximization of linear
  regions.
\newblock In \emph{AISTATS}, 2019.

\bibitem[Anderson et~al.(2020)Anderson, Huchette, Ma, Tjandraatmadja, and
  Vielma]{Anderson2020}
Ross Anderson, Joey Huchette, Will Ma, Christian Tjandraatmadja, and Juan~Pablo
  Vielma.
\newblock Strong mixed-integer programming formulations for trained neural
  networks.
\newblock \emph{Mathematical Programming}, 183\penalty0 (1-2):\penalty0 3--39,
  2020.

\bibitem[Fischetti and Jo(2018)]{Fischetti2018}
Matteo Fischetti and Jason Jo.
\newblock Deep neural networks and mixed integer linear optimization.
\newblock \emph{Constraints}, 23\penalty0 (3):\penalty0 296--309, 2018.

\bibitem[Lomuscio and Maganti(2017)]{Lomuscio2017}
Alessio Lomuscio and Lalit Maganti.
\newblock An approach to reachability analysis for feed-forward relu neural
  networks.
\newblock \emph{arXiv preprint arXiv:1706.07351}, 2017.

\bibitem[Tsay et~al.(2021)Tsay, Kronqvist, Thebelt, and Misener]{Tsay2021}
Calvin Tsay, Jan Kronqvist, Alexander Thebelt, and Ruth Misener.
\newblock Partition-based formulations for mixed-integer optimization of
  trained {ReLU} neural networks.
\newblock In \emph{NIPS}, 2021.

\bibitem[De~Palma et~al.(2021)De~Palma, Behl, Bunel, Torr, and Kumar]{De2021}
Alessandro De~Palma, Harkirat~S. Behl, Rudy Bunel, Philip Torr, and M.~Pawan
  Kumar.
\newblock Scaling the convex barrier with active sets.
\newblock In \emph{ICLR}, 2021.

\bibitem[Wang et~al.(2023)Wang, Lozano, Cardonha, and Bergman]{Wang2023}
Keliang Wang, Leonardo Lozano, Carlos Cardonha, and David Bergman.
\newblock Optimizing over an ensemble of trained neural networks.
\newblock \emph{INFORMS Journal on Computing}, 2023.

\bibitem[Mi{\v{s}}i{\'c}(2020)]{Misic2020}
Velibor~V. Mi{\v{s}}i{\'c}.
\newblock Optimization of tree ensembles.
\newblock \emph{Operations Research}, 68\penalty0 (5):\penalty0 1605--1624,
  2020.

\bibitem[Mistry et~al.(2021)Mistry, Letsios, Krennrich, Lee, and
  Misener]{Mistry2021}
Miten Mistry, Dimitrios Letsios, Gerhard Krennrich, Robert~M. Lee, and Ruth
  Misener.
\newblock Mixed-integer convex nonlinear optimization with gradient-boosted
  trees embedded.
\newblock \emph{INFORMS Journal on Computing}, 33\penalty0 (3):\penalty0
  1103--1119, 2021.

\bibitem[Thebelt et~al.(2021)Thebelt, Kronqvist, Mistry, Lee, Sudermann-Merx,
  and Misener]{Thebelt2021}
Alexander Thebelt, Jan Kronqvist, Miten Mistry, Robert~M. Lee, Nathan
  Sudermann-Merx, and Ruth Misener.
\newblock Entmoot: A framework for optimization over ensemble tree models.
\newblock \emph{Computers \& Chemical Engineering}, 151:\penalty0 107343, 2021.

\bibitem[Huchette et~al.(2023)Huchette, Mu{\~n}oz, Serra, and
  Tsay]{Huchette2023}
Joey Huchette, Gonzalo Mu{\~n}oz, Thiago Serra, and Calvin Tsay.
\newblock When deep learning meets polyhedral theory: A survey.
\newblock \emph{arXiv preprint arXiv:2305.00241}, 2023.

\bibitem[Cheng et~al.(2017)Cheng, N{\"u}hrenberg, and Ruess]{Cheng2017}
Chih-Hong Cheng, Georg N{\"u}hrenberg, and Harald Ruess.
\newblock Maximum resilience of artificial neural networks.
\newblock In \emph{Automated Technology for Verification and Analysis}, 2017.

\bibitem[Bunel et~al.(2018)Bunel, Turkaslan, Torr, Kohli, and
  Mudigonda]{Bunel2018}
Rudy~R. Bunel, Ilker Turkaslan, Philip Torr, Pushmeet Kohli, and Pawan~K.
  Mudigonda.
\newblock A unified view of piecewise linear neural network verification.
\newblock \emph{NIPS}, 2018.

\bibitem[Zhang et~al.(2018{\natexlab{b}})Zhang, Weng, Chen, Hsieh, and
  Daniel]{Zhang2018b}
Huan Zhang, Tsui-Wei Weng, Pin-Yu Chen, Cho-Jui Hsieh, and Luca Daniel.
\newblock Efficient neural network robustness certification with general
  activation functions.
\newblock In \emph{NIPS}, 2018{\natexlab{b}}.

\bibitem[Tjeng et~al.(2019)Tjeng, Xiao, and Tedrake]{Tjeng2019}
Vincent Tjeng, Kai~Y. Xiao, and Russ Tedrake.
\newblock Evaluating robustness of neural networks with mixed integer
  programming.
\newblock In \emph{ICLR}, 2019.

\bibitem[Botoeva et~al.(2020)Botoeva, Kouvaros, Kronqvist, Lomuscio, and
  Misener]{Botoeva2020}
Elena Botoeva, Panagiotis Kouvaros, Jan Kronqvist, Alessio Lomuscio, and Ruth
  Misener.
\newblock Efficient verification of {ReLU}-based neural networks via dependency
  analysis.
\newblock In \emph{AAAI}, 2020.

\bibitem[Bunel et~al.(2020{\natexlab{b}})Bunel, Mudigonda, Turkaslan, Torr, Lu,
  and Kohli]{Bunel2020b}
Rudy Bunel, P~Mudigonda, Ilker Turkaslan, Philip Torr, Jingyue Lu, and Pushmeet
  Kohli.
\newblock Branch and bound for piecewise linear neural network verification.
\newblock \emph{Journal of Machine Learning Research}, 21\penalty0 (2020),
  2020{\natexlab{b}}.

\bibitem[Bunel et~al.(2020{\natexlab{c}})Bunel, De~Palma, Desmaison, Dvijotham,
  Kohli, Torr, and Kumar]{Bunel2020c}
Rudy Bunel, Alessandro De~Palma, Alban Desmaison, Krishnamurthy Dvijotham,
  Pushmeet Kohli, Philip Torr, and M.~Pawan Kumar.
\newblock Lagrangian decomposition for neural network verification.
\newblock In \emph{UAI}, 2020{\natexlab{c}}.

\bibitem[Xu et~al.(2021)Xu, Zhang, Wang, Wang, Jana, Lin, and Hsieh]{Xu2021}
Kaidi Xu, Huan Zhang, Shiqi Wang, Yihan Wang, Suman Jana, Xue Lin, and Cho-Jui
  Hsieh.
\newblock Fast and complete: Enabling complete neural network verification with
  rapid and massively parallel incomplete verifiers.
\newblock In \emph{ICLR}, 2021.

\bibitem[Wang et~al.(2021)Wang, Zhang, Xu, Lin, Jana, Hsieh, and
  Kolter]{Wang2021a}
Shiqi Wang, Huan Zhang, Kaidi Xu, Xue Lin, Suman Jana, Cho-Jui Hsieh, and
  J.~Zico Kolter.
\newblock Beta-crown: Efficient bound propagation with per-neuron split
  constraints for neural network robustness verification.
\newblock In \emph{NIPS}, 2021.

\bibitem[Say et~al.(2017)Say, Wu, Zhou, and Sanner]{Say2017}
Buser Say, Ga~Wu, Yu~Qing Zhou, and Scott Sanner.
\newblock Nonlinear hybrid planning with deep net learned transition models and
  mixed-integer linear programming.
\newblock In \emph{IJCAI}, 2017.

\bibitem[Delarue et~al.(2020)Delarue, Anderson, and
  Tjandraatmadja]{Delarue2020}
Arthur Delarue, Ross Anderson, and Christian Tjandraatmadja.
\newblock Reinforcement learning with combinatorial actions: An application to
  vehicle routing.
\newblock In \emph{NIPS}, 2020.

\bibitem[Ryu et~al.(2020)Ryu, Chow, Anderson, Tjandraatmadja, and
  Boutilier]{Ryu2020}
Moonkyung Ryu, Yinlam Chow, Ross Anderson, Christian Tjandraatmadja, and Craig
  Boutilier.
\newblock {CAQL}: Continuous action {Q}-learning.
\newblock In \emph{ICLR}, 2020.

\bibitem[Serra et~al.(2021)Serra, Yu, Kumar, and Ramalingam]{Serra2021}
Thiago Serra, Xin Yu, Abhinav Kumar, and Srikumar Ramalingam.
\newblock Scaling up exact neural network compression by {ReLU} stability.
\newblock In \emph{NIPS}, 2021.

\bibitem[Papalexopoulos et~al.(2022)Papalexopoulos, Tjandraatmadja, Anderson,
  Vielma, and Belanger]{Papalexopoulos2022}
Theodore~P. Papalexopoulos, Christian Tjandraatmadja, Ross Anderson, Juan~Pablo
  Vielma, and David Belanger.
\newblock Constrained discrete black-box optimization using mixed-integer
  programming.
\newblock In \emph{ICML}, 2022.

\bibitem[Wu et~al.(2022)Wu, Barrett, Sharif, Narodytska, and Singh]{Wu2022}
Haoze Wu, Clark Barrett, Mahmood Sharif, Nina Narodytska, and Gagandeep Singh.
\newblock Scalable verification of {GNN}-based job schedulers.
\newblock \emph{Proceedings of the ACM on Programming Languages}, 6\penalty0
  (OOPSLA2):\penalty0 1036--1065, 2022.

\bibitem[{Mc Donald}(2022)]{Donald2022}
Tom {Mc Donald}.
\newblock Mixed integer (non-) linear programming formulations of graph neural
  networks.
\newblock \emph{Master Thesis}, 2022.

\bibitem[Friedman(2007)]{Friedman2007}
Eric~J. Friedman.
\newblock Fundamental domains for integer programs with symmetries.
\newblock In \emph{Combinatorial Optimization and Applications}, 2007.

\bibitem[Liberti(2008)]{Liberti2008}
Leo Liberti.
\newblock Automatic generation of symmetry-breaking constraints.
\newblock \emph{Lecture Notes in Computer Science}, 5165:\penalty0 328--338,
  2008.

\bibitem[Margot(2009)]{Margot2009}
Fran{\c{c}}ois Margot.
\newblock Symmetry in integer linear programming.
\newblock \emph{50 Years of Integer Programming 1958-2008: From the Early Years
  to the State-of-the-Art}, 2009.

\bibitem[Liberti(2012)]{Liberti2012}
Leo Liberti.
\newblock Reformulations in mathematical programming: Automatic symmetry
  detection and exploitation.
\newblock \emph{Mathematical Programming}, 131:\penalty0 273--304, 2012.

\bibitem[Liberti and Ostrowski(2014)]{Liberti2014}
Leo Liberti and James Ostrowski.
\newblock Stabilizer-based symmetry breaking constraints for mathematical
  programs.
\newblock \emph{Journal of Global Optimization}, 60:\penalty0 183--194, 2014.

\bibitem[Dias and Liberti(2015)]{Dias2015}
Gustavo Dias and Leo Liberti.
\newblock Orbital independence in symmetric mathematical programs.
\newblock In \emph{Combinatorial Optimization and Applications}, 2015.

\bibitem[Hojny and Pfetsch(2019)]{Hojny2019}
Christopher Hojny and Marc~E. Pfetsch.
\newblock Polytopes associated with symmetry handling.
\newblock \emph{Mathematical Programming}, 175\penalty0 (1-2):\penalty0
  197--240, 2019.

\bibitem[Ceccon et~al.(2022)Ceccon, Jalving, Haddad, Thebelt, Tsay, Laird, and
  Misener]{Ceccon2022}
Francesco Ceccon, Jordan Jalving, Joshua Haddad, Alexander Thebelt, Calvin
  Tsay, Carl~D. Laird, and Ruth Misener.
\newblock {OMLT}: Optimization \& machine learning toolkit.
\newblock \emph{Journal of Machine Learning Research}, 23\penalty0
  (1):\penalty0 15829--15836, 2022.

\bibitem[Frisch et~al.(2002)Frisch, Hnich, Kiziltan, Miguel, and
  Walsh]{Frisch2002}
Alan Frisch, Brahim Hnich, Zeynep Kiziltan, Ian Miguel, and Toby Walsh.
\newblock Global constraints for lexicographic orderings.
\newblock In \emph{Principles and Practice of Constraint Programming-CP 2002},
  pages 93--108, 2002.

\bibitem[Marques-Silva et~al.(2011)Marques-Silva, Argelich, Gra{\c{c}}a, and
  Lynce]{Marques2011}
Joao Marques-Silva, Josep Argelich, Ana Gra{\c{c}}a, and In{\^e}s Lynce.
\newblock Boolean lexicographic optimization: algorithms \& applications.
\newblock \emph{Annals of Mathematics and Artificial Intelligence},
  62:\penalty0 317--343, 2011.

\bibitem[Elgabou and Frisch(2014)]{Elgabou2014}
Hani~A. Elgabou and Alan~M. Frisch.
\newblock Encoding the lexicographic ordering constraint in sat modulo
  theories.
\newblock In \emph{Thirteenth International Workshop on Constraint Modelling
  and Reformulation}, 2014.

\bibitem[Katz et~al.(2017)Katz, Barrett, Dill, Julian, and
  Kochenderfer]{Katz2017}
Guy Katz, Clark Barrett, David~L. Dill, Kyle Julian, and Mykel~J. Kochenderfer.
\newblock Reluplex: An efficient {SMT} solver for verifying deep neural
  networks.
\newblock In \emph{Computer Aided Verification}, 2017.

\bibitem[Achterberg(2009)]{Achterberg2009}
Tobias Achterberg.
\newblock {SCIP}: solving constraint integer programs.
\newblock \emph{Mathematical Programming Computation}, 1:\penalty0 1--41, 2009.

\bibitem[Junttila and Kaski(2007)]{Junttila2007}
Tommi Junttila and Petteri Kaski.
\newblock Engineering an efficient canonical labeling tool for large and sparse
  graphs.
\newblock In \emph{ALENEX}, 2007.

\bibitem[Junttila and Kaski(2011)]{Junttila2011}
Tommi Junttila and Petteri Kaski.
\newblock Conflict propagation and component recursion for canonical labeling.
\newblock In \emph{TAPAS}, 2011.

\bibitem[{Gurobi Optimization, LLC}(2023)]{Gurobi2023}
{Gurobi Optimization, LLC}.
\newblock {Gurobi Optimizer Reference Manual}, 2023.
\newblock URL \url{https://www.gurobi.com}.

\bibitem[Kaibel et~al.(2011)Kaibel, Peinhardt, and Pfetsch]{Kaibel2011}
Volker Kaibel, Matthias Peinhardt, and Marc~E. Pfetsch.
\newblock Orbitopal fixing.
\newblock \emph{Discrete Optimization}, 8\penalty0 (4):\penalty0 595--610,
  2011.

\bibitem[Margot(2002)]{Margot2002}
Fran{\c{c}}ois Margot.
\newblock Pruning by isomorphism in branch-and-cut.
\newblock \emph{Mathematical Programming}, 94:\penalty0 71--90, 2002.

\bibitem[Sherali and Smith(2001)]{Sherali2001}
Hanif~D. Sherali and J.~Cole Smith.
\newblock Improving discrete model representations via symmetry considerations.
\newblock \emph{Management Science}, 47\penalty0 (10):\penalty0 1396--1407,
  2001.

\bibitem[Ghoniem and Sherali(2011)]{Ghoniem2011}
Ahmed Ghoniem and Hanif~D. Sherali.
\newblock Defeating symmetry in combinatorial optimization via objective
  perturbations and hierarchical constraints.
\newblock \emph{IIE Transactions}, 43\penalty0 (8):\penalty0 575--588, 2011.

\bibitem[Kaibel and Pfetsch(2008)]{Kaibel2008}
Volker Kaibel and Marc Pfetsch.
\newblock Packing and partitioning orbitopes.
\newblock \emph{Mathematical Programming}, 114\penalty0 (1):\penalty0 1--36,
  2008.

\bibitem[Faenza and Kaibel(2009)]{Faenza2009}
Yuri Faenza and Volker Kaibel.
\newblock Extended formulations for packing and partitioning orbitopes.
\newblock \emph{Mathematics of Operations Research}, 34\penalty0 (3):\penalty0
  686--697, 2009.

\bibitem[Ostrowski et~al.(2010)Ostrowski, Anjos, and Vannelli]{Ostrowski2010}
James Ostrowski, Miguel~F. Anjos, and Anthony Vannelli.
\newblock \emph{Symmetry in scheduling problems}.
\newblock Citeseer, 2010.

\bibitem[Fey and Lenssen(2019)]{Fey2019}
Matthias Fey and Jan~E. Lenssen.
\newblock Fast graph representation learning with {PyTorch Geometric}.
\newblock In \emph{ICLR 2019 Workshop on Representation Learning on Graphs and
  Manifolds}, 2019.

\bibitem[Bergman et~al.(2022)Bergman, Huang, Brooks, Lodi, and
  Raghunathan]{Bergman2022}
David Bergman, Teng Huang, Philip Brooks, Andrea Lodi, and Arvind~U.
  Raghunathan.
\newblock Janos: An integrated predictive and prescriptive modeling framework.
\newblock \emph{INFORMS Journal on Computing}, 34\penalty0 (2):\penalty0
  807--816, 2022.

\bibitem[Lueg et~al.(2021)Lueg, Grimstad, Mitsos, and
  Schweidtmann]{reluMIP2021}
Laurens Lueg, Bjarne Grimstad, Alexander Mitsos, and Artur~M. Schweidtmann.
\newblock {reluMIP}: Open source tool for {MILP} optimization of {ReLU} neural
  networks, 2021.
\newblock URL \url{https://github.com/ChemEngAI/ReLU_ANN_MILP}.

\bibitem[Odele and Macchietto(1993)]{Odele1993}
O.~Odele and Sandro Macchietto.
\newblock Computer aided molecular design: A novel method for optimal solvent
  selection.
\newblock \emph{Fluid Phase Equilibria}, 82:\penalty0 47--54, 1993.

\bibitem[Churi and Achenie(1996)]{Churi1996}
Nachiket Churi and Luke~E.K. Achenie.
\newblock Novel mathematical programming model for computer aided molecular
  design.
\newblock \emph{Industrial \& Engineering Chemistry Research}, 35\penalty0
  (10):\penalty0 3788--3794, 1996.

\bibitem[Camarda and Maranas(1999)]{Camarda1999}
Kyle~V. Camarda and Costas~D. Maranas.
\newblock Optimization in polymer design using connectivity indices.
\newblock \emph{Industrial \& Engineering Chemistry Research}, 38\penalty0
  (5):\penalty0 1884--1892, 1999.

\bibitem[Sinha et~al.(1999)Sinha, Achenie, and Ostrovsky]{Sinha1999}
Manish Sinha, Luke~E.K. Achenie, and Gennadi~M. Ostrovsky.
\newblock Environmentally benign solvent design by global optimization.
\newblock \emph{Computers \& Chemical Engineering}, 23\penalty0 (10):\penalty0
  1381--1394, 1999.

\bibitem[Sahinidis et~al.(2003)Sahinidis, Tawarmalani, and Yu]{Sahinidis2003}
Nikolaos~V. Sahinidis, Mohit Tawarmalani, and Minrui Yu.
\newblock Design of alternative refrigerants via global optimization.
\newblock \emph{AIChE Journal}, 49\penalty0 (7):\penalty0 1761--1775, 2003.

\bibitem[Zhang et~al.(2015)Zhang, Cignitti, and Gani]{Zhang2015}
Lei Zhang, Stefano Cignitti, and Rafiqul Gani.
\newblock Generic mathematical programming formulation and solution for
  computer-aided molecular design.
\newblock \emph{Computers \& Chemical Engineering}, 78:\penalty0 79--84, 2015.

\bibitem[Blum and Reymond(2009)]{Blum2009}
Lorenz~C. Blum and Jean-Louis Reymond.
\newblock 970 million druglike small molecules for virtual screening in the
  chemical universe database gdb-13.
\newblock \emph{Journal of the American Chemical Society}, 131\penalty0
  (25):\penalty0 8732--8733, 2009.

\bibitem[Rupp et~al.(2012)Rupp, Tkatchenko, M{\"u}ller, and
  Von~Lilienfeld]{Rupp2012}
Matthias Rupp, Alexandre Tkatchenko, Klaus-Robert M{\"u}ller, and O.~Anatole
  Von~Lilienfeld.
\newblock Fast and accurate modeling of molecular atomization energies with
  machine learning.
\newblock \emph{Physical Review Letters}, 108\penalty0 (5):\penalty0 058301,
  2012.

\bibitem[Ruddigkeit et~al.(2012)Ruddigkeit, Van~Deursen, Blum, and
  Reymond]{Ruddigkeit2012}
Lars Ruddigkeit, Ruud Van~Deursen, Lorenz~C. Blum, and Jean-Louis Reymond.
\newblock Enumeration of 166 billion organic small molecules in the chemical
  universe database gdb-17.
\newblock \emph{Journal of Chemical Information and Modeling}, 52\penalty0
  (11):\penalty0 2864--2875, 2012.

\bibitem[Ramakrishnan et~al.(2014)Ramakrishnan, Dral, Rupp, and
  Von~Lilienfeld]{Ramakrishnan2014}
Raghunathan Ramakrishnan, Pavlo~O. Dral, Matthias Rupp, and O.~Anatole
  Von~Lilienfeld.
\newblock Quantum chemistry structures and properties of 134 kilo molecules.
\newblock \emph{Scientific Data}, 1\penalty0 (1):\penalty0 1--7, 2014.

\end{thebibliography}

\newpage
\appendix

\section{Theoretical guarantee: supplementary}\label{appendix:supplementary}
\setcounter{property}{0}
\setcounter{lemma}{0}
\subsection{Proofs of properties and lemma}
\begin{property}\label{index_property_1}
For any $s=1,2,\dots,N-1$, $\mathcal I^s(v)
            \begin{cases}
                <s,&\forall v\in V_1^s\\
                \ge s,&\forall v\in V_2^s
            \end{cases}$.
\end{property}
\begin{proof}
    At the $s$-th iteration, nodes in $V_1^{s}$ have been indexed by $0,1,\dots,s-1$. Therefore, if $v\in V_1^{s}$, then $\mathcal I^{s}(v)=\mathcal I(v)<s$. If $v\in V_2^{s}$, since $\mathcal I^{s}(v)$ is the sum of $s$ and $rank^s(v)$ (which is non-negative), then we have $\mathcal I^{s}(v)\ge s$.
\end{proof}

\begin{property}\label{index_property_2}
For any $s_1,s_2=1,2,\dots,N-1$,
    \begin{equation*}
        \begin{aligned}
            s_1\le s_2
            \Rightarrow
            \mathcal N^{s_1}(v)=\mathcal N^{s_2}(v)\cap [s_1],~\forall v\in V_2^{s_2}
        \end{aligned}
    \end{equation*}
\end{property}
\begin{proof}
    First, $\mathcal N^{s_1}(\cdot)$ is well-defined on $V_2^{s_2}$ since $V_2^{s_2}\subset V_2^{s_1}$. By definitions of $\mathcal N^{s}(\cdot)$ and $V_1^{s}$, for any $v\in V_2^{s_2}$ we have:
    \begin{equation*}
        \begin{aligned}
            \mathcal N^{s_1}(v)=&\{\mathcal I(u)~|~u\in \mathcal N(v)\cap V_1^{s_1}\}\\
            =& \{\mathcal I(u)~|~u\in \mathcal N(v)\cap V_1^{s_1}\cap V_1^{s_2}\}\\
            =& \{\mathcal I(u)~|~u\in (\mathcal N(v)\cap V_1^{s_2})\cap V_1^{s_1}\}\\
            =& \{\mathcal I(u)~|~u\in \mathcal N(v)\cap V_1^{s_2}, \mathcal I(u)<s_1\}\\
            =& \{\mathcal I(u)~|~u\in \mathcal N(v)\cap V_1^{s_2}\}\cap [s_1]\\
            =&\mathcal N^{s_2}(v)\cap [s_1]
        \end{aligned}
    \end{equation*}
    where the second equation uses the fact that $V_1^{s_1}\subset V_1^{s_2}$, and the fourth equation holds since $u\in V_1^{s_1}\Leftrightarrow \mathcal I(u)<s_1$ (using Property \ref{index_property_1}).
\end{proof}

\begin{property}\label{index_property_3}
    Given any multisets $A,B$ with no more than $L$ integer elements in $[0,M-1]$, we have:
    \begin{equation*}
        \begin{aligned}
            LO(A)\le LO(B) \Rightarrow LO(A\cap [m])\le LO(B\cap [m]),~\forall m =1,2,\dots, M
        \end{aligned}
    \end{equation*}
\end{property}
\begin{proof}
    By the definition of lexicographical order for multisets, denote the corresponding sequence to $A,B,A\cap [m],B\cap [m]$ by $a,b,a^m,b^m \in \mathcal S(M,L)$, respectively. Then it is equivalent to show that:
    \begin{equation*}
        \begin{aligned}
            LO(a)\le LO(b) \Rightarrow LO(a^m)\le LO(b^m),~\forall m=1,2,\dots,M
        \end{aligned}
    \end{equation*}
    
    Let $a=(a_1,a_2,\dots,a_L)$ and $b=(b_1,b_2,\dots,b_L)$. If $LO(a)=LO(b)$, then $a=b$ and $a^m=b^m$. Thus $LO(a^m)=LO(b^m)$. Otherwise, if $LO(a)<LO(b)$, then there exists $1\le l\le L$ such that:
    \begin{equation*}
        \begin{aligned}
            \begin{cases}
            a_i=b_i, & \forall 1\le i<l \\
            a_l<b_l &
            \end{cases}
        \end{aligned}
    \end{equation*}
    If $m\le a_l$, then $a^m=b^m$, which means $LO(a^m)=LO(b^m)$. Otherwise, if $m>a_l$, then $a^m$ and $b^m$ share the same first $l-1$ elements. But the $l$-th element of $a^m$ is $a_l$, while the $l$-th element of $b_m$ is either $b_l$ or $M$. In both cases, we have $LO(a^m)<LO(b^m)$.
\end{proof}

\begin{lemma}
    For any two nodes $u$ and $v$, 
    \begin{equation*}
        \begin{aligned}
            \mathcal I(u)<\mathcal I(v)
            \Rightarrow
            \mathcal I^s(u)\le \mathcal I^s(v),~\forall s=1,2,\dots,N-1
        \end{aligned}
    \end{equation*}
\end{lemma}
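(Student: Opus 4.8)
The plan is to fix two nodes $u,v$ with $\mathcal I(u)<\mathcal I(v)$, write $p:=\mathcal I(u)$ and $q:=\mathcal I(v)$, and split the index range $s\in\{1,\dots,N-1\}$ according to how $s$ compares with $p$ and $q$. The key bookkeeping fact, read off from Algorithm~\ref{index_algorithm}, is that $V_1^s$ is exactly the set of nodes with final index strictly less than $s$; hence $u\in V_1^s \Leftrightarrow s>p$ and $v\in V_1^s \Leftrightarrow s>q$, while $u\in V_2^s \Leftrightarrow s\le p$ and $v\in V_2^s \Leftrightarrow s\le q$.

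First I would clear the easy ranges using Property~\ref{index_property_1}. If $s>q$, both nodes are already indexed and $\mathcal I^s(u)=p<q=\mathcal I^s(v)$. If $p<s\le q$, then $u$ is indexed but $v$ is not, so $\mathcal I^s(u)=p<s\le\mathcal I^s(v)$. The degenerate case $p=0$, i.e.\ $u=v_0$, is also immediate since then $\mathcal I^s(u)=0\le\mathcal I^s(v)$ for every $s$. This leaves the single nontrivial range $1\le s\le p$ (so in particular $p\ge1$), where $u,v\in V_2^s$; there $\mathcal I^s(u)=rank^s(u)+s$ and $\mathcal I^s(v)=rank^s(v)+s$, so it suffices to prove $rank^s(u)\le rank^s(v)$, and for that it is enough to show $LO(\mathcal N^s(u))\le LO(\mathcal N^s(v))$, because then every unindexed node with strictly smaller neighbor-order than $u$ also has strictly smaller neighbor-order than $v$.

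The core of the argument is therefore to establish $LO(\mathcal N^s(u))\le LO(\mathcal N^s(v))$ for all $1\le s\le p$. I would start at iteration $p=\mathcal I(u)$: node $u=v^p$ is the one selected there, and since $v\in V_2^p$ (because $\mathcal I(v)=q>p$), the selection rule $v^p\in\arg\min_{w\in V_2^p}LO(\mathcal N^p_t(w))$ gives $LO(\mathcal N^p_t(u))\le LO(\mathcal N^p_t(v))$ — the tie-breaking "arbitrarily choose one" does not affect this $\le$. Next, using Property~\ref{index_property_1}, the temporary indices of indexed neighbors are $<p$ and those of unindexed neighbors are $\ge p$, so truncation recovers the indexed-neighbor set: $\mathcal N^p_t(w)\cap[p]=\mathcal N^p(w)$. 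Applying Property~\ref{index_property_3} with $m=p$ then upgrades the inequality to $LO(\mathcal N^p(u))\le LO(\mathcal N^p(v))$. Finally, for any $s$ with $1\le s\le p$, Property~\ref{index_property_2} (with $s_1=s$, $s_2=p$, using $u,v\in V_2^p$) gives $\mathcal N^s(w)=\mathcal N^p(w)\cap[s]$, so a second application of Property~\ref{index_property_3} with $m=s$ yields $LO(\mathcal N^s(u))\le LO(\mathcal N^s(v))$. Adding $s$ back to both ranks completes the proof.

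I expect the main obstacle to be handling cleanly the distinction between the "temporary" neighbor multiset $\mathcal N^s_t(\cdot)$ (built from the temporary indices of \emph{all} neighbors and used in the selection step) and the "indexed" neighbor set $\mathcal N^s(\cdot)$ (built from the final indices of \emph{already indexed} neighbors and used in computing $rank^s$), together with checking that the two truncations $\cap[p]$ and $\cap[s]$ land on the right level sets and that $u\in V_2^p$ (it is still unindexed at the \emph{start} of iteration $p$, receiving its index only at the end of that iteration). Everything else reduces to routine applications of Properties~\ref{index_property_1}--\ref{index_property_3}.
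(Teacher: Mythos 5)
Your proposal is correct and follows essentially the same route as the paper's proof: the same split into the ranges $s>\mathcal I(v)$, $\mathcal I(u)<s\le\mathcal I(v)$, and $s\le\mathcal I(u)$, with the nontrivial range handled exactly as in the paper — the selection of $u$ at iteration $\mathcal I(u)$ gives $LO(\mathcal N^{\mathcal I(u)}_t(u))\le LO(\mathcal N^{\mathcal I(u)}_t(v))$, truncation by $[\mathcal I(u)]$ and Property 3 transfer this to the indexed-neighbor sets, and Property 2 plus a second application of Property 3 pull it back to level $s$. Your explicit justification of the truncation identity and of the step from neighbor orders to ranks is slightly more detailed than the paper's, but the argument is the same.
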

\begin{proof}
\emph{Case 1: } If $\mathcal I(u)<\mathcal I(v)<s$, then:
\begin{equation*}
    \begin{aligned}
        \mathcal I^s(u)=\mathcal I(u)<\mathcal I(v)=\mathcal I^s(v)
    \end{aligned}
\end{equation*}

\emph{Case 2: } If $\mathcal I(u)<s\le \mathcal I(v)$, then $\mathcal I^s(u)=\mathcal I(u)$ and:
\begin{equation*}
    \begin{aligned}
        \mathcal I^s(v)\ge s>\mathcal I(u)=\mathcal I^s(u)
    \end{aligned}
\end{equation*}
where Property \ref{index_property_1} is used.

\emph{Case 3: } If $s\le \mathcal I(u)<\mathcal I(v)$, at $\mathcal I(u)$-th iteration, $u$ is chosen to be indexed $\mathcal I(u)$. Thus:
    \begin{equation*}
        \begin{aligned}
            LO(\mathcal N^{\mathcal I(u)}_{t}(u))\le LO(\mathcal N^{\mathcal I(u)}_{t}(v))
        \end{aligned}
    \end{equation*}
    According to the definition of $\mathcal N^{\mathcal I(u)}(\cdot)$ and $\mathcal N^{\mathcal I(u)}_t(\cdot)$, we have:
    \begin{equation*}
        \begin{aligned}
            \mathcal N^{\mathcal I(u)}(u)=\mathcal N^{\mathcal I(u)}_t(u)\cap [\mathcal I(u)],
            ~\mathcal N^{\mathcal I(u)}(v)=\mathcal N^{\mathcal I(u)}_t(v)\cap [\mathcal I(u)]
        \end{aligned}
    \end{equation*}
    Using Property \ref{index_property_3} (with $A=\mathcal N^{\mathcal I(u)}_t(u)$, $B=\mathcal N^{\mathcal I(u)}_t(v)$, $m=\mathcal I(u)$) yields:
    \begin{equation*}
        \begin{aligned}
            LO(\mathcal N^{\mathcal I(u)}(u))\le LO(\mathcal N^{\mathcal I(u)}(v))
        \end{aligned}
    \end{equation*}
    Apply Property \ref{index_property_2} (with $s_1=s,s_2=\mathcal I(u)$) for $u$ and $v$, we have:
    \begin{equation*}
        \begin{aligned}
            \mathcal N^s(u)=\mathcal N^{\mathcal I(u)}(u)\cap [s],
            ~\mathcal N^s(v)=\mathcal N^{\mathcal I(u)}(v)\cap [s]
        \end{aligned}
    \end{equation*}
    Using Property \ref{index_property_3} again (with $A=N^{\mathcal I(u)}(u)$, $B=N^{\mathcal I(u)}(v)$, $m=s$) gives:
    \begin{equation*}
        \begin{aligned}
            LO(\mathcal N^s(u))\le LO(\mathcal N^s(v))
        \end{aligned}
    \end{equation*}
    Recall the definition of $\mathcal I^s(\cdot)$, we have:
    \begin{equation*}
        \begin{aligned}
            \mathcal I^s(u)\le \mathcal I^s(v)
        \end{aligned}
    \end{equation*}
\end{proof}

\begin{lemma}
    For any undirected, connected graph $G=(V,E)$, if one indexing of $G$ sastifies \eqref{index rest}, then it satisfies \eqref{connectivity}.
\end{lemma}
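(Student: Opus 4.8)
The plan is to argue by contradiction. Suppose the indexing satisfies \eqref{index rest} but not \eqref{connectivity}; since both are vacuous when $N\le 1$, assume $N\ge 2$. Then there is a node $w\in[N]\backslash\{0\}$ with no neighbor of smaller index, i.e. $\mathcal N(w)\cap[w]=\emptyset$. For every node $v$ set $P(v):=\mathcal N(v)\cap[w]$, the set of its neighbors lying in $\{0,1,\dots,w-1\}$; thus $P(w)=\emptyset$. My goal is to deduce $P(v)=\emptyset$ for all $v$ with $w\le v\le N-1$, which contradicts connectivity, since it means there is no edge between the nonempty node sets $\{0,\dots,w-1\}$ and $\{w,\dots,N-1\}$.

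The first step is to propagate this emptiness upward along the chain of constraints \eqref{index rest}. For each $v$ with $w\le v\le N-2$, I start from $LO(\mathcal N(v)\backslash\{v+1\})\le LO(\mathcal N(v+1)\backslash\{v\})$ and intersect both multisets with $[w]$; Property~3 preserves the inequality, so $LO\big((\mathcal N(v)\backslash\{v+1\})\cap[w]\big)\le LO\big((\mathcal N(v+1)\backslash\{v\})\cap[w]\big)$. Because $v\ge w$, neither $v$ nor $v+1$ lies in $[w]=\{0,\dots,w-1\}$, so deleting them before intersecting changes nothing, and these reduce exactly to $LO(P(v))\le LO(P(v+1))$. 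Chaining from $v=w$ up to $v=N-2$ gives $LO(P(w))\le LO(P(w+1))\le\cdots\le LO(P(N-1))$.

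The second step uses the extremality of the empty multiset for the lexicographic order. With $M=N$ and $L=N-1$, the empty multiset is padded to the sequence $(N,N,\dots,N)\in\mathcal S(N,N-1)$, which is lexicographically maximal, and it is the unique multiset with elements in $[0,N-1]$ attaining this maximum (any nonempty such multiset has smallest element $\le N-1<N$). Hence $LO(P(w))=LO(\emptyset)$ is already the largest possible value, so the monotone chain forces $LO(P(v))=LO(\emptyset)$, and therefore $P(v)=\emptyset$, for every $v=w,\dots,N-1$. Since $P(v)=\mathcal N(v)\cap[w]$, no node in $\{w,\dots,N-1\}$ is adjacent to any node in $\{0,\dots,w-1\}$, contradicting the connectedness of $G$ and completing the proof.

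I expect the only delicate part to be the bookkeeping in the first step: verifying that, precisely because $v\ge w$, the ``$\backslash\{v+1\}$'' and ``$\backslash\{v\}$'' appearing in \eqref{index rest} become irrelevant after intersecting with $[w]$, so the constraint on full neighborhoods collapses to a clean monotonicity statement about the low neighborhoods $P(v)$. Everything else is routine once one observes that $\emptyset$ sits at the top of the $LO$ order (so emptiness at $w$ must cascade to all larger indices) and that minimality of $w$ is in fact not needed.
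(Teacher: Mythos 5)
Your proof is correct and takes essentially the same route as the paper's: both apply Property~3 to intersect the chained constraints \eqref{index rest} with a prefix $[m]$, use that the empty multiset is $LO$-maximal (it pads to $(N,\dots,N)$), and then contradict connectedness of $G$. The only cosmetic difference is that you propagate emptiness of the low-index neighborhoods upward to produce an edgeless cut between $\{0,\dots,w-1\}$ and $\{w,\dots,N-1\}$, whereas the paper argues prefix-by-prefix and contradicts the existence of a single witness $v'>v+1$ adjacent to the prefix.
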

\begin{proof}
    Node $0$ itself is a connected graph. Assume that the subgraph induced by nodes $\{0,1,\dots, v\}$ is connected, it suffices to show that the subgraph induced by nodes $\{0,1,\dots, v+1\}$ is connected. Equivalently, we need to prove that there exists $u<v+1$ such that $A_{u,v+1}=1$.

    Assume that $A_{u,v+1}=0,~\forall u<v+1$. Since $G$ is connected, there exists $v'>v+1$ such that:
    \begin{equation*}
        \begin{aligned}
            \exists u<v+1,~s.t.~A_{u,v'}=1
        \end{aligned}
    \end{equation*}
    Then we know:
    \begin{equation*}
        \begin{aligned}
            \mathcal N(v+1)\cap [v+1]=\emptyset,~\mathcal N(v')\cap [v+1]\neq \emptyset
        \end{aligned}
    \end{equation*}
    Recall the definition of $LO(\cdot)$, we obtain that:
    \begin{equation}\label{v+1>v'}\tag{$>$}
        \begin{aligned}
            LO(\mathcal N(v+1)\cap [v+1])>LO(\mathcal N(v')\cap [v+1])
        \end{aligned}
    \end{equation}
    
    Since the indexing satisfies \eqref{index rest}, then we have:
    \begin{equation*}
        \begin{aligned}
            LO(\mathcal N(w)\backslash\{w+1\})\le LO(\mathcal N(w+1)\backslash \{w\}),~\forall v<w<v'
        \end{aligned}
    \end{equation*}
    
    Applying Property \ref{index_property_3}:
    \begin{equation*}
        \begin{aligned}
            LO((\mathcal N(w)\backslash\{w+1\})\cap [v+1])\le LO((\mathcal N(w+1)\backslash \{w\})\cap [v+1]),~\forall v<w<v'
        \end{aligned}
    \end{equation*}
    Note that $w>v$. Therefore, 
    \begin{equation*}
        \begin{aligned}
            LO(\mathcal N(w)\cap [v+1])\le LO(\mathcal N(w+1)\cap [v+1]),~\forall v<w<v'
        \end{aligned}
    \end{equation*}
    Choosing $w=v+1$ gives:
    \begin{equation*}\label{v+1<=v'}\tag{$\le$}
        \begin{aligned}
            LO(\mathcal N(v+1)\cap [v+1])\le LO(\mathcal N(v')\cap [v+1])
        \end{aligned}
    \end{equation*}
    The contradiction between \eqref{v+1>v'} and \eqref{v+1<=v'} completes the proof.
\end{proof}

\subsection{Example for applying Algorithm \ref{index_algorithm}}\label{appendix:example}
Given a graph with $N=6$ nodes as shown in Figure \ref{algorithm_step_1}, where $v_0$ is already indexed $0$. We provide details of Algorithm \ref{index_algorithm} by indexing the rest $5$ nodes step by step for further illustration. 

Before indexing, we first calculate the neighbor sets for each node:
\begin{equation*}
    \begin{aligned}
        &\mathcal N(v_0)=\{v_1,v_2,v_3,v_4,v_5\}, \mathcal N(v_1)=\{v_0,v_2,v_3,v_4\},\mathcal N(v_2)=\{v_0,v_1,v_5\}\\
        &\mathcal N(v_3)=\{v_0,v_1,v_4\},\mathcal N(v_4)=\{v_0,v_1,v_3\},\mathcal N(v_5)=\{v_0,v_2\}
    \end{aligned}
\end{equation*}

$\bm {s=1:}$ $V_1^1=\{v_0\},V_2^1=\{v_1,v_2,v_3,v_4,v_5\}$
\begin{figure}[H]
    \centering
    \includegraphics[scale=0.9]{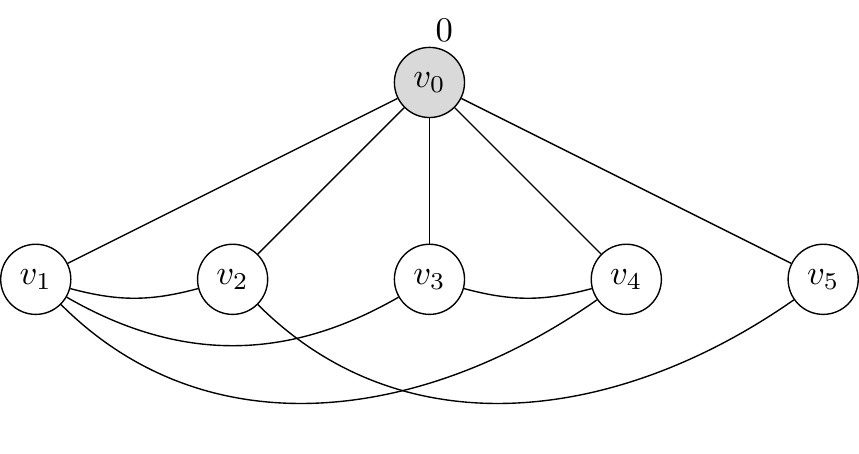}
    \caption{$V_1^1=\{v_0\}$}
    \label{algorithm_step_1}
\end{figure}

Obtain indexed neighbors to each unindexed node:
\begin{equation*}
    \begin{aligned}
        \mathcal N^1(v_1)=\{0\},\mathcal N^1(v_2)=\{0\},\mathcal N^1(v_3)=\{0\},\mathcal N^1(v_4)=\{0\},\mathcal N^1(v_5)=\{0\}
    \end{aligned}
\end{equation*}
Rank all unindexed nodes:
\begin{equation*}
    \begin{aligned}
        rank(v_1)=rank(v_2)=rank(v_3)=rank(v_4)=rank(v_5)=0
    \end{aligned}
\end{equation*}
Assign a temporary index to each node based on previous indexes (for indexed nodes) and ranks (for unindexed nodes):
\begin{equation*}
    \begin{aligned}
        \mathcal I^1(v_0)=0,\mathcal I^1(v_1)=1,\mathcal I^1(v_2)=1,\mathcal I^1(v_3)=1,\mathcal I^1(v_4)=1,\mathcal I^1(v_5)=1
    \end{aligned}
\end{equation*}
After having indexes for all nodes, define temporary neighbor sets:
\begin{equation*}
    \begin{aligned}
        &\mathcal N^1_t(v_1)=\{0,1,1,1\},\mathcal N^1_t(v_2)=\{0,1,1\},\mathcal N^1_t(v_3)=\{0,1,1\},\\
        &\mathcal N^1_t(v_4)=\{0,1,1\},\mathcal N^1_t(v_5)=\{0,1\}
    \end{aligned}
\end{equation*}
Based on the temporary neighbor sets, $v_1$ is chosen to be indexed $1$ (i.e., $\mathcal I(v_1)=1$).

$\bm {s=2:}$ $V_1^2=\{v_0,v_1\},V_2^2=\{v_2,v_3,v_4,v_5\}$

\begin{figure}[H]
    \centering
    \includegraphics[scale=0.9]{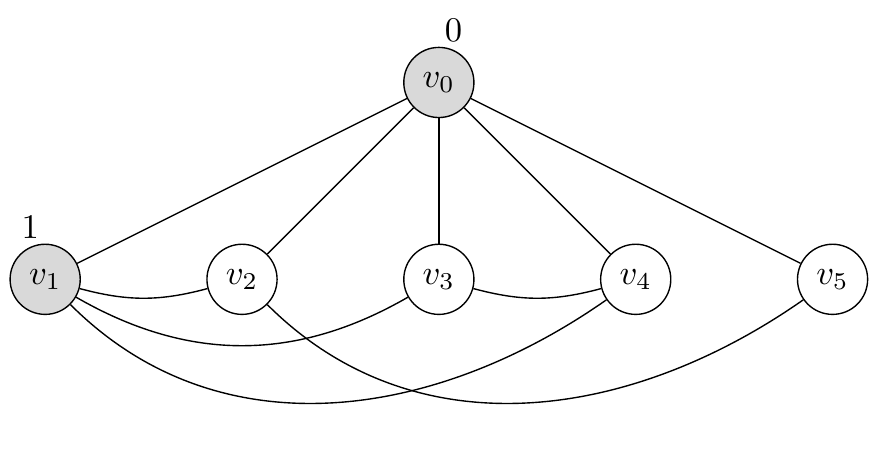}
    \caption{$V_1^2=\{v_0,v_1\}$}
    \label{algorithm_step_2}
\end{figure}

Obtain indexed neighbors to each unindexed node:
\begin{equation*}
    \begin{aligned}
        \mathcal N^2(v_2)=\{0,1\},\mathcal N^2(v_3)=\{0,1\},\mathcal N^2(v_4)=\{0,1\},\mathcal N^2(v_5)=\{0\}
    \end{aligned}
\end{equation*}
Rank all unindexed nodes:
\begin{equation*}
    \begin{aligned}
        rank(v_2)=rank(v_3)=rank(v_4)=0,rank(v_5)=1
    \end{aligned}
\end{equation*}
Assign a temporary index to each node based on previous indexes (for indexed nodes) and ranks (for unindexed nodes):
\begin{equation*}
    \begin{aligned}
        \mathcal I^2(v_2)=2,\mathcal I^2(v_3)=2,\mathcal I^2(v_4)=2,\mathcal I^2(v_5)=3
    \end{aligned}
\end{equation*}
After having indexes for all nodes, define temporary neighbor sets:
\begin{equation*}
    \begin{aligned}
        \mathcal N^2_t(v_2)=\{0,1,3\},\mathcal N^2_t(v_3)=\{0,1,2\},\mathcal N^2_t(v_4)=\{0,1,2\},\mathcal N^2_t(v_5)=\{0,2\}
    \end{aligned}
\end{equation*}
Based on the temporary neighbor sets, both $v_3$ and $v_4$ can be chosen to be indexed $2$. Without loss of generality, let $\mathcal I(v_3)=2$.

\textbf{Note:} This step explains why temporary indexes and neighbor sets should be added into Algorithm \ref{index_algorithm}. Otherwise, $v_2$ is also valid to be index $2$, following which $v_3,v_4,v_5$ will be indexed $3,4,5$. Then the neighbor set for $\mathcal I(v_2)=2$ is $\{0,1,5\}$ while the neighbor set for $\mathcal I(v_3)=3$ is $\{0,1,4\}$, which violates constraints \eqref{index rest}.

$\bm {s=3:}$ $V_1^3=\{v_0,v_1,v_3\},V_2^3=\{v_2,v_4,v_5\}$

\begin{figure}[H]
    \centering
    \includegraphics[scale=0.9]{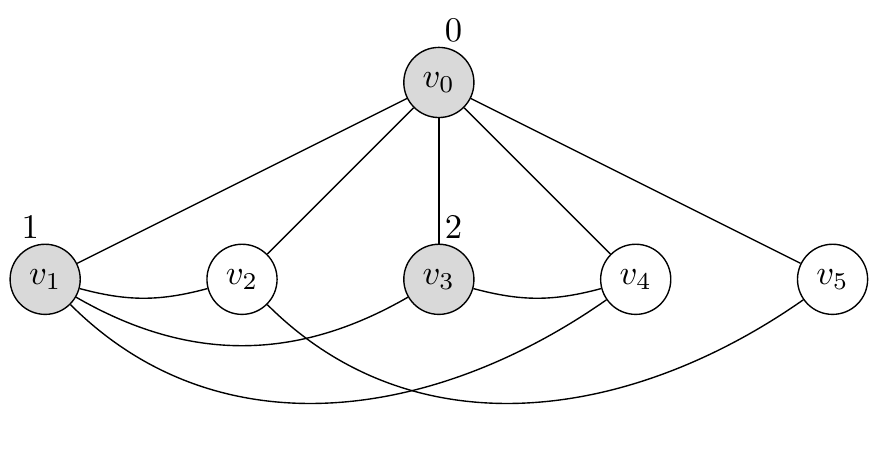}
    \caption{$V_1^3=\{v_0,v_1,v_3\}$}
    \label{algorithm_step_3}
\end{figure}

Obtain indexed neighbors to each unindexed node:
\begin{equation*}
    \begin{aligned}
        \mathcal N^3(v_2)=\{0,1\},\mathcal N^3(v_4)=\{0,1,2\},\mathcal N^3(v_5)=\{0\}
    \end{aligned}
\end{equation*}
Rank all unindexed nodes:
\begin{equation*}
    \begin{aligned}
        rank(v_2)=1,rank(v_4)=0,rank(v_5)=2
    \end{aligned}
\end{equation*}
Assign a temporary index to each node based on previous indexes (for indexed nodes) and ranks (for unindexed nodes):
\begin{equation*}
    \begin{aligned}
        \mathcal I^3(v_2)=4,\mathcal I^3(v_4)=3,\mathcal I^3(v_5)=5
    \end{aligned}
\end{equation*}
After having indexes for all nodes, define temporary neighbor sets:
\begin{equation*}
    \begin{aligned}
        \mathcal N^3_t(v_2)=\{0,1,5\},\mathcal N^3_t(v_4)=\{0,1,2\},\mathcal N^3_t(v_5)=\{0,4\}
    \end{aligned}
\end{equation*}
Based on the temporary neighbor sets, $v_4$ is chosen to be indexed $3$ (i.e., $\mathcal I(v_4)=3$).

$\bm {s=4:}$ $V_1^4=\{v_0,v_1,v_3,v_4\},V_2^4=\{v_2,v_5\}$

\begin{figure}[H]
    \centering
    \includegraphics[scale=0.9]{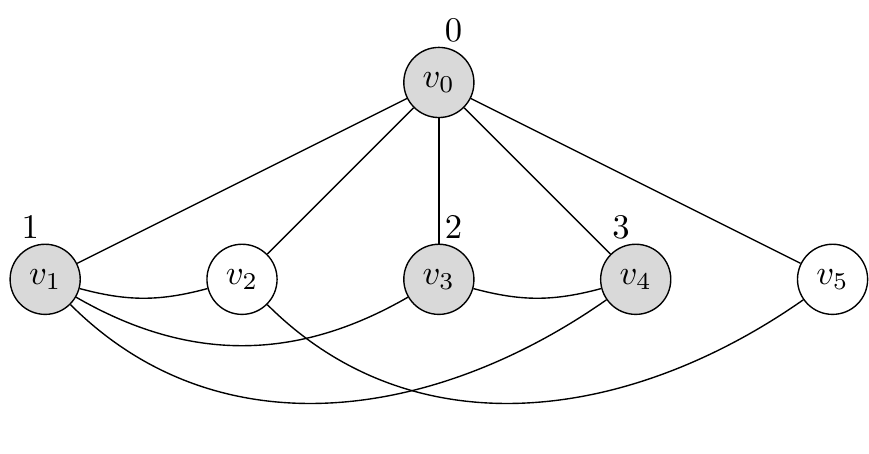}
    \caption{$V_1^4=\{v_0,v_1,v_3,v_4\}$}
    \label{algorithm_step_4}
\end{figure}

Obtain indexed neighbors to each unindexed node:
\begin{equation*}
    \begin{aligned}
        \mathcal N^4(v_2)=\{0,1\},\mathcal N^4(v_5)=\{0\}
    \end{aligned}
\end{equation*}
Rank all unindexed nodes:
\begin{equation*}
    \begin{aligned}
        rank(v_2)=0,rank(v_5)=1
    \end{aligned}
\end{equation*}
Assign a temporary index to each node based on previous indexes (for indexed nodes) and ranks (for unindexed nodes):
\begin{equation*}
    \begin{aligned}
        \mathcal I^4(v_2)=4,\mathcal I^4(v_5)=5
    \end{aligned}
\end{equation*}
After having indexes for all nodes, define temporary neighbor sets:
\begin{equation*}
    \begin{aligned}
        \mathcal N^4_t(v_2)=\{0,1,5\},\mathcal N^4_t(v_5)=\{0,4\}
    \end{aligned}
\end{equation*}
Based on the temporary neighbor sets, $v_2$ is chosen to be indexed $4$ (i.e., $\mathcal I(v_2)=4$).

$\bm {s=5:}$ $V_1^5=\{v_0,v_1,v_2,v_3,v_4\},V_2^4=\{v_5\}$

\begin{figure}[H]
    \centering
    \includegraphics[scale=0.9]{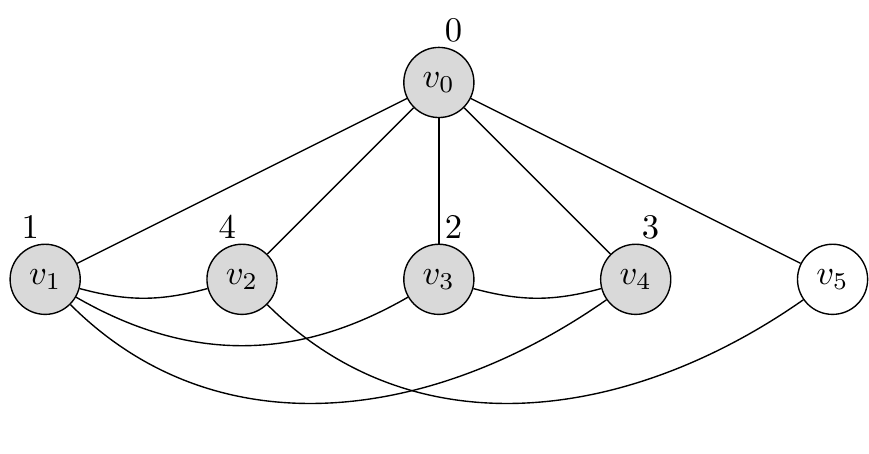}
    \caption{$V_1^5=\{v_0,v_1,v_2,v_3,v_4\}$}
    \label{algorithm_step_5}
\end{figure}

Since there is only node $v_5$ unindexed, without running the algorithm we still know that $v_5$ is chosen to be indexed $5$ (i.e., $\mathcal I(v_5)=5$).

\textbf{Output:}
\begin{figure}[H]
    \centering
    \includegraphics[scale=0.9]{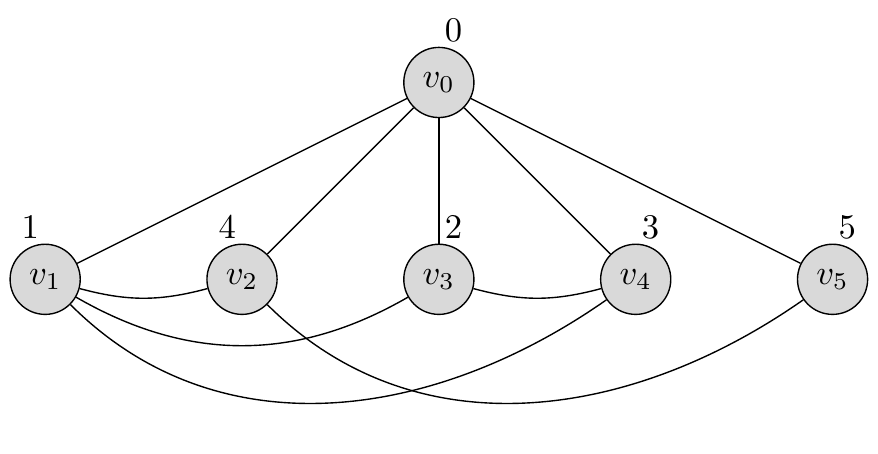}
    \caption{$V_1^6=\{v_0,v_1,v_2,v_3,v_4,v_5\}$}
    \label{algorithm_step_6}
\end{figure}

\newpage

\section{MIP formulation for CAMD: details}\label{appendix:MIP-CAMD}
In this part, we provide details of the MIP formulation for CAMD, including parameters, variables and constraints. Especially, we use atom set $\{C,N,O,S\}$ as an example to help illuminate. The number of nodes $N$ exclude hydrogen atoms since they are implicitly represented by $N^h$ features.
\subsection{Parameters}\label{appendix:MIP-CAMD-parameters}
    \begin{table}[h] 
        \caption{List of parameters}
        \centering
        \vspace{1mm}
        \begin{tabular}{ccc}
            \toprule   
            Parameter & Description & Value \\
            \midrule
            $N$ & number of nodes &  \\
            $F$ & number of features & $16$ \\
            $N^t$ & number of atom types & $4$ \\
            $N^n$ & number of neighbors & $5$ \\
            $N^h$ & number of hydrogen & $5$ \\
            \midrule
            $I^t$ & index for $N^t$ & $\{0,1,2,3\}$ \\
            $I^n$ & index for $N^n$ & $\{4,5,6,7,8\}$ \\
            $I^h$ & index for $N^h$ & $\{9,10,11,12,13\}$ \\
            $I^{db}$ & index for double bond & $14$ \\
            $I^{tb}$ & index for triple bond & $15$ \\
            \midrule
            $Atom$ & atom types &  $\{C,N,O,S\}$ \\ 
            $Cov$ & covalence of atom & $\{4,3,2,2\}$ \\
            \bottomrule  
        \end{tabular}
    \end{table}

\subsection{Variables}\label{appendix:MIP-CAMD-variables}
    \begin{table}[h] 
        \caption{List of variables for atom features}
        \centering
        \vspace{1mm}
        \begin{tabular}{cc|cc|cc}
            \toprule
            $X_{v,f}$ & Type & $X_{v,f}$ & \#Neighbors & $X_{v,f}$ & \#Hydrogen \\
            \midrule
            $0$ & C & $4$ & $0$ & $9$  & $0$ \\
            $1$ & N & $5$ & $1$ & $10$ & $1$ \\
            $2$ & O & $6$ & $2$ & $11$ & $2$ \\
            $3$ & S & $7$ & $3$ & $12$ & $3$ \\
                &   & $8$ & $4$ & $13$ & $4$ \\
            \bottomrule 
        \end{tabular}
    \end{table}
    For the existence and types of bonds, we add the following extra features and variables:
    \begin{itemize}
        \item $X_{v,I^{db}}, v\in [N]$: if atom $v$ is included in at least one double bond.
        \item $X_{v,I^{tb}}, v\in [N]$: if atom $v$ is included in at least one triple bond.
        \item $A_{u,v},u\neq v\in [N]$: if there is one bond between atom $u$ and $v$.
        \item $A_{v,v},v\in [N]$: if node $v$ exists.
        \item $DB_{u,v},u\neq v\in [N]$: if there is one double bond between atom $u$ and $v$.
        \item $TB_{u,v},u\neq v\in [N]$: if there is one triple bond between atom $u$ and $v$.
    \end{itemize}

    \emph{Remark:} Variables $A_{v,v}$ admits us to design molecules with at most $N$ atoms, which brings flexibility in real-world applications. In our experiments, however, we fix them to consider possible structures with exact $N$ atoms. Therefore, constraints \eqref{C2} are replaced by:
    \begin{equation*}
        \begin{aligned}
            A_{v,v}=1,~\forall v\in [N]
        \end{aligned}
    \end{equation*}

    Without this replacement, we can add an extra term on \eqref{C26} to exclude non-existing nodes and only index existing node:
    \begin{equation*}
        \begin{aligned}
            \sum\limits_{f\in [F]}2^{F-f-1}\cdot X_{0,f}\le \sum\limits_{f\in [F]} 2^{F-f-1}\cdot X_{v,f} + 2^F\cdot (1-A_{v,v}),~\forall v\in [N]\backslash\{0\}
        \end{aligned}
    \end{equation*}

\newpage

\subsection{Constraints}\label{appendix:MIP-CAMD-constraints}
\begin{itemize}
    \item There are at least two atoms and one bond between them:
    \begin{equation}\label{C1}\tag{C1}
        \begin{aligned}
            A_{0,0}=A_{1,1}=A_{0,1}=1
        \end{aligned}
    \end{equation}
    
    \item Atoms with smaller indexes exist:
    \begin{equation}\label{C2}\tag{C2}
        \begin{aligned}
            A_{v,v}\ge A_{v+1,v+1},~\forall v\in [N-1]
        \end{aligned}
    \end{equation}
    
    \item The adjacency matrix $A$ is symmetric:
    \begin{equation}\label{C3}\tag{C3}
        \begin{aligned}
            A_{u,v}=A_{v,u},~\forall u,v\in [N],u<v
        \end{aligned}
    \end{equation}
    
    \item Atom $v$ is linked to other atoms if and only if it exists:
    \begin{equation}\label{C4}\tag{C4}
        \begin{aligned}
            (N-1)\cdot A_{v,v}\ge \sum\limits_{u\neq v} A_{u,v},~\forall v\in [N]
        \end{aligned}
    \end{equation}
    \emph{Note:} $N-1$ here is a big-M constant.
    
    \item If atom $v$ exists, then it has to be linked with at least one atom with smaller index:
    \begin{equation}\label{C5}\tag{C5}
        \begin{aligned}
            A_{v,v}\le \sum\limits_{u<v} A_{u,v},~\forall v\in [N]
        \end{aligned}
    \end{equation}

    \item Atom $v$ cannot have a double bond with it self:
    \begin{equation}\label{C6}\tag{C6}
        \begin{aligned}
            DB_{v,v}=0,~\forall v\in [N]
        \end{aligned}
    \end{equation}

    \item DB is also symmetric:
    \begin{equation}\label{C7}\tag{C7}
        \begin{aligned}
            DB_{u,v}=DB_{v,u},~\forall u,v\in [N],u<v
        \end{aligned}
    \end{equation}

    \item Atom $v$ cannot have a triple bond with it self:
    \begin{equation}\label{C8}\tag{C8}
        \begin{aligned}
            TB_{v,v}=0,~\forall v\in [N]
        \end{aligned}
    \end{equation}

    \item TB is also symmetric:
    \begin{equation}\label{C9}\tag{C9}
        \begin{aligned}
            TB_{u,v}=TB_{v,u},~\forall u,v\in [N],u<v
        \end{aligned}
    \end{equation}

    \item There is at most one bond type between two atoms:
    \begin{equation}\label{C10}\tag{C10}
        \begin{aligned}
            DB_{u,v}+TB_{u,v}\le A_{u,v},~\forall u,v\in [N],u<v
        \end{aligned}
    \end{equation}

    \item If atom $v$ exists, then it has only one type:
    \begin{equation}\label{C11}\tag{C11}
        \begin{aligned}
            A_{v,v}=\sum\limits_{f\in I^t}X_{v,f},~\forall v\in [N]
        \end{aligned}
    \end{equation}

    \item If atom $v$ exists, then it has only number of neighbors:
    \begin{equation}\label{C12}\tag{C12}
        \begin{aligned}
            A_{v,v}=\sum\limits_{f\in I^n}X_{v,f},~\forall v\in [N]
        \end{aligned}
    \end{equation}

    \item If atom $v$ exists, then it has only one number of hydrogen:
    \begin{equation}\label{C13}\tag{C13}
        \begin{aligned}
            A_{v,v}=\sum\limits_{f\in I^h}X_{v,f},~\forall v\in [N]
        \end{aligned}
    \end{equation}

    \item The number of neighbors of atom $v$ equals to its degree:
    \begin{equation}\label{C14}\tag{C14}
        \begin{aligned}
            \sum\limits_{u\neq v}A_{u,v}=\sum\limits_{i\in [N^n]}i\cdot X_{v,I^n_i},~\forall v\in [N]
        \end{aligned}
    \end{equation}

    \item If there is possibly one double bond between atom $u$ and $v$, then the double bond feature of both atoms should be $1$ and these two atoms are linked:
    \begin{equation}\label{C15}\tag{C15}
        \begin{aligned}
            3\cdot DB_{u,v}\le X_{u,I^{db}}+X_{v,I^{db}}+A_{u,v},~\forall u,v\in [N],u<v
        \end{aligned}
    \end{equation}

    \item If there is possibly one triple bond between atom $u$ and $v$, then the triple bond feature of both atoms should be $1$ and these two atoms are linked:
    \begin{equation}\label{C16}\tag{C16}
        \begin{aligned}
            3\cdot TB_{u,v}\le X_{u,I^{tb}}+X_{v,I^{tb}}+A_{u,v},~\forall u,v\in [N],u<v
        \end{aligned}
    \end{equation}

    \item The maximal number of double bonds for atom $v$ is limited by its covalence:
    \begin{equation}\label{C17}\tag{C17}
        \begin{aligned}
            \sum\limits_{u\in [N]} DB_{u,v}\le \sum\limits_{i\in[N^t]} \left\lfloor\frac{Cov_i}{2}\right\rfloor \cdot X_{v,I^t_i},~\forall v\in [N]
        \end{aligned}
    \end{equation}

    \item The maximal number of triple bonds for atom $v$ is limited by its covalence:
    \begin{equation}\label{C18}\tag{C18}
        \begin{aligned}
            \sum\limits_{u\in [N]} TB_{u,v}\le \sum\limits_{i\in[N^t]} \left\lfloor\frac{Cov_i}{3}\right\rfloor \cdot X_{v,I^t_i},~\forall v\in [N]
        \end{aligned}
    \end{equation}

    \item If atom $v$ is not linked with other atoms with double bond, then its double bond feature should be $0$:
    \begin{equation}\label{C19}\tag{C19}
        \begin{aligned}
            X_{v,I^{db}}\le \sum\limits_{u\in [N]}DB_{u,v},~\forall v\in [N]
        \end{aligned}
    \end{equation}

    \item If atom $v$ is not linked with other atoms with triple bond, then its triple bond feature should be $0$:
    \begin{equation}\label{C20}\tag{C20}
        \begin{aligned}
            X_{v,I^{tb}}\le \sum\limits_{u\in [N]}TB_{u,v},~\forall v\in [N]
        \end{aligned}
    \end{equation}

    \item The covalence of atom $v$ equals to the sum of its neighbors, hydrogen, double bonds, and triple bonds:
    \begin{equation}\label{C21}\tag{C21}
        \begin{aligned}
            \sum\limits_{i\in[N^t]} Cov_i\cdot X_{v,I^t_i}=&\sum\limits_{i\in [N^n]} i\cdot X_{v,I^n_i}+\sum\limits_{i\in [N^h]}i\cdot X_{i,I^h_i}\\
            &+\sum\limits_{u\in[N]}DB_{u,v}+\sum\limits_{u\in[N]}2\cdot TB_{u,v},~\forall v\in [N]
        \end{aligned}
    \end{equation}

    \item Practically, there could be bounds for each type of atom:
    \begin{equation}\label{C22}\tag{C22}
        \begin{aligned}
            LB_{Atom_i}\le \sum\limits_{v\in [N]}X_{v,I^t_i}\le UB_{Atom_i},~\forall i\in [N^t]
        \end{aligned}
    \end{equation}

    \item Practically, there could be bounds for number of double bonds:
    \begin{equation}\label{C23}\tag{C23}
        \begin{aligned}
            LB_{db}\le \sum\limits_{v\in [N]}\sum\limits_{u<v}DB_{u,v}\le UB_{db}
        \end{aligned}
    \end{equation}

    \item Practically, there could be bounds for number of triple bonds:
    \begin{equation}\label{C24}\tag{C24}
        \begin{aligned}
            LB_{tb}\le \sum\limits_{v\in [N]}\sum\limits_{u<v}TB_{u,v} \le UB_{tb}
        \end{aligned}
    \end{equation}

    \item Practically, there could be bounds for number of rings. Since we force the connectivity, $N-1$ bonds are needed to form a tree and then each extra bond forms a ring. Therefore, we can bound the number of rings by: 
    \begin{equation}\label{C25}\tag{C25}
        \begin{aligned}
            LB_{ring}\le \sum\limits_{v\in [N]}\sum\limits_{u<v}A_{u,v}-(N-1) \le UB_{ring}
        \end{aligned}
    \end{equation}
\end{itemize}

\newpage

\section{Implementation details and more results}\label{appendix:implementation}
\subsection{Dataset preparation}\label{appendix:dataset}
Both datasets used in this paper are available in python package \textit{chemprop} \cite{Yang2019}. Molecular features ($133$ atom features and $14$ bond features) are extracted from their SMILES representations. Based on the these features and our definition for variables in Appendix \ref{appendix:MIP-CAMD-variables}, we calculate $F=16$ (since both datasets  have $4$ types of atoms) features for each atom, construct adjacency matrix $A$ as well as double and triple bond matrices $DB$ and $TB$. In this part, we first briefly introduce these datasets and then provide details about how to preprocess them. 

\textbf{QM7 dataset:} Consist of $6,830$ molecules with at most $7$ heavy atoms $C,N,O,S$. One quantum mechanics property is provided for each molecule. We filter the dataset and remove three types of incompatible molecules: (1) molecules with aromaticity (i.e., with float electron(s) inside a ring); (2) molecules constraints $S$ atom with covalence $6$ (since one type of atom can only have one covalence in our formulation); (3) molecules with non-zero radical electrons. After preprocessing, there are $5,822$ molecules left with properties re-scaled to $0\sim 1$. 

To calculate the bounds needed in constraints \eqref{C22} - \eqref{C25}, we check all molecules in QM7 dataset. The maximal ratios between the number of N,O and S atoms and the total atoms are $3/7, 1/3, 1/7$, respectively, while the minimal ratio between the number of C atoms and the total atoms is $1/2$. The maximal ratios between the number of double bonds, triple bonds, rings and the number of atoms are $3/7,3/7,3/7$, respectively. Therefore, we set the following bounds:
\begin{equation*}
    \begin{aligned}
        &LB_C=\left\lceil\frac{N}{2}\right\rceil,
        UB_N=\max\left(1,\left\lfloor\frac{3N}{7}\right\rfloor\right),
        UB_O=\max\left(1,\left\lfloor\frac{N}{3}\right\rfloor\right),UB_S=\max\left(1,\left\lfloor\frac{N}{7}\right\rfloor\right)\\
        &UB_{db}=\left\lfloor\frac{N}{2}\right\rfloor,
        UB_{tb}=\left\lfloor\frac{N}{2}\right\rfloor,
        UB_{ring}=\left\lfloor\frac{N}{2}\right\rfloor
    \end{aligned}
\end{equation*}

\textbf{QM9 dataset:} Consist of $133,885$ molecules with at most $9$ heavy atoms $C,N,O,F$. $12$ quantum mechanics properties are provided for each molecule. We filter the dataset and remove two types of incompatible molecules: (1) molecules with aromaticity; (2) molecules with non-zero radical electrons. After preprocessing, there are $108,723$ molecules left. Among these features, we choose one property $U_0$, which denotes the internal energy at $0$K. These properties are re-scaled to $0\sim 1$.

Similarly, we set the following bounds for the experiments on the QM9 dataset:
\begin{equation*}
    \begin{aligned}
        &LB_C=\left\lceil\frac{N}{5}\right\rceil,
        UB_N=\left\lfloor\frac{3N}{5}\right\rfloor, 
        UB_O=\left\lfloor\frac{4N}{7}\right\rfloor,
        UB_F=\left\lfloor\frac{4N}{5}\right\rfloor\\
        &UB_{db}=\left\lfloor\frac{N}{2}\right\rfloor,
        UB_{tb}=\left\lfloor\frac{N}{2}\right\rfloor,
        UB_{ring}=\left\lfloor\frac{2N}{3}\right\rfloor
    \end{aligned}
\end{equation*}

\subsection{Training of GNNs}\label{appendix:training}
For QM7 dataset, the GNN's structure is shown in Table \ref{GNN for QM7}. For each run, we first randomly shuffle the dataset, then use the first $5000$ elements to train and the last $822$ elements to test. Each model is trained for $100$ iterations with learning rate $0.01$ and batch size $64$. The average training ($l_1$) loss is $0.0241$ and test loss is $0.0285$.

    \begin{table}[ht] 
        \caption{Structure of GNN for QM7 dataset}
        \label{GNN for QM7}
        \centering
        \vspace{1mm}
        \begin{tabular}{ccccc}
        \toprule
        index & layer & in features & out features & activation \\
        \midrule
        1 & SAGEConv & $16$ & $16$ & ReLU \\
        2 & SAGEConv & $16$ & $32$ & ReLU \\
        3 & Pooling & $32N$ & $32$ & None \\
        4 & Linear & $32$ & $16$ & ReLU \\
        5 & Linear & $16$ & $4$ & ReLU \\
        6 & Linear & $4$ & $1$ & None \\
        \bottomrule 
        \end{tabular}
    \end{table}

For QM9 dataset, the GNN's structure is shown in Table \ref{GNN for QM9}. We apply the same experimental set-up as before, but with a larger number of channels to fit this much larger dataset. For each run, after randomly shuffled, the first $80000$ elements are used to train and the last $28723$ elements are used to test. The average training loss is $0.0036$ and test loss is $0.0036$.

    \begin{table}[ht] 
        \caption{Structure of GNN for QM9 dataset}
        \label{GNN for QM9}
        \centering
        \vspace{1mm}
        \begin{tabular}{ccccc}
        \toprule
        index & layer & in features & out features & activation \\
        \midrule
        1 & SAGEConv & $16$ & $32$ & ReLU \\
        2 & SAGEConv & $32$ & $64$ & ReLU \\
        3 & Pooling & $64N$ & $64$ & None \\
        4 & Linear & $64$ & $16$ & ReLU \\
        5 & Linear & $16$ & $4$ & ReLU \\
        6 & Linear & $4$ & $1$ & None \\
        \bottomrule 
        \end{tabular}
    \end{table}

For both datasets, the trained GNNs achieve comparable accuracy to the state-of-the-art methods \cite{Yang2019}. Note that this comparison is unfair since we preprocess each dataset and remove some incompatible molecules. Also, we only choose one property to train and then optimize. However, low losses mean that the trained GNNs approximate true properties well.

\subsection{Full experimental results}\label{appendix:results}
We report full experimental results in this section. For each dataset with a given $N$ and a formulation, numbers of variables ($\#var_c$: continuous, $\#var_b$: binary) and constraints ($\#con$) after presolve stage in Gurobi are first reported. Then we count the number of successful runs (i.e., achieving optimality) $\#run$. For all successful runs, we provide the mean, $25$th percentile ($Q_1$), and $75$th percentilie ($Q_3$) of the running time $time_{tol}$ as well as the first time to find the optimal solution $time_{opt}$. The time limit for each run is $10$ hours. Except for the time  limit and random seed, we use the default setting in Gurobi for other parameters such as the tolerance.
    \begin{table}[htp] 
        \setlength\tabcolsep{3pt}
        \caption{Numerical results for QM7 dataset. }
        \label{result_optimality_QM7}
        \centering
        \vspace{1mm}
        \begin{tabular}{clrrrrrrrrrr}
        \toprule
        \multirow{2}{*}{$N$} & \multirow{2}{*}{method} & \multirow{2}{*}{$\#var_c$} & \multirow{2}{*}{$\#var_b$} & \multirow{2}{*}{$\#con$} & \multirow{2}{*}{$\#run$} & \multicolumn{3}{c}{$time_{tol}$ (s)} & \multicolumn{3}{c}{$time_{opt}$ (s)}\\
        &&&&&& mean & $Q_1$ & $Q_3$ & mean & $Q_1$ & $Q_3$ \\
        \midrule
        \multirow{4}{*}{$4$}
        & bilinear    & $\bm{616}$ & $411$ & $1812$ & $50$ & $477$ & $314$ & $579$ & $101$ & $\bm{13}$ & $147$ \\
        & bilinear+BrS & $\bm{616}$ & $383$ & $1755$ & $50$ & $149$ & $115$ & $167$ & $94$ & $66$ & $131$ \\
        & big-M        & $792$ & $281$ & $1838$ & $50$ & $514$ & $206$ & $722$ & $130$ & $15$ & $150$ \\
        & big-M+BrS     & $760$ & $\bm{269}$ & $\bm{1748}$ & $50$ & $\bm{114}$ & $\bm{91}$ & $\bm{128}$ & $\bm{53}$ & $\bm{13}$ & $\bm{94}$ \\
        \midrule
        \multirow{4}{*}{$5$} 
        & bilinear    & $\bm{840}$ & $611$ & $2776$ & $37$ & $9365$ & $3551$ & $9029$ & $471$ & $169$ & $339$  \\ 
        & bilinear+BrS & $\bm{840}$ & $565$ & $2679$ & $50$ & $659$ & $282$ & $\bm{652}$ & $\bm{198}$ & $136$ & $\bm{257}$ \\
        & big-M        & $1142$ & $359$ & $2795$ & $50$ & $6369$ & $1424$ & $9703$ & $729$ & $\bm{74}$ & $616$  \\
        & big-M+BrS     & $1097$ & $\bm{346}$ & $\bm{2660}$ & $50$ & $\bm{501}$ & $\bm{266}$ & $660$ & $205$ & $91$ & $281$ \\
        \midrule
        \multirow{4}{*}{$6$} 
        & bilinear    & $1096$ & $828$ & $3893$ & $2$ & $23853$ & $22081$ & $25627$ & $\bm{389}$ & $373$ & $\bm{396}$ \\ 
        & bilinear+BrS & $\bm{1064}$ & $747$ & $\bm{3602}$ & $48$ & $6400$ & $1748$ & $6225$ & $1751$ & $\bm{334}$ & $2154$ \\
        & big-M        & $1628$ & $436$ & $4018$ & $10$ & $21484$ & $16095$ & $24848$ & $14368$ & $9966$ & $19930$ \\
        & big-M+BrS     & $1505$ & $\bm{420}$ & $3665$ & $47$ & $\bm{4700}$ & $\bm{1656}$ & $\bm{3759}$ & $1156$ & $515$ & $1440$ \\
        \midrule
        \multirow{4}{*}{$7$} 
        & bilinear    & $1384$ & $1076$ & $5193$ & $0$ & $*$ & $*$ & $*$ & $*$ & $*$ & $*$ \\ 
        & bilinear+BrS & $\bm{1320}$ & $950$ & $\bm{4684}$ & $47$ & $12989$ & $4651$ & $19229$ & $\bm{3366}$ & $497$ & $\bm{4587}$ \\
        & big-M        & $2134$ & $516$ & $5364$ & $0$ & $*$ & $*$ & $*$ & $*$ & $*$ & $*$ \\ 
        & big-M+BrS     & $1943$ & $\bm{495}$ & $4806$ & $49$ & $\bm{10802}$ & $\bm{4159}$ & $\bm{16101}$ & $4008$ & $\bm{64}$ & $5251$ \\
        \midrule
        \multirow{4}{*}{$8$} 
        & bilinear    & $1704$ & $1355$ & $6676$ & $0$ & $*$ & $*$ & $*$ & $*$ & $*$ & $*$ \\ 
        & bilinear+BrS & $\bm{1608}$ & $1181$ & $\bm{5941}$ & $12$ & $16219$ & $7623$ & $22951$ & $3082$ & $90$ & $223$ \\
        & big-M        & $2676$ & $599$ & $6869$ & $0$ & $*$ & $*$ & $*$ & $*$ & $*$ & $*$ \\ 
        & big-M+BrS     & $2390$ & $\bm{572}$ & $6040$ & $12$ & $\bm{6606}$ & $\bm{2132}$ & $\bm{3120}$ & $\bm{2854}$ & $\bm{55}$ & $\bm{87}$ \\
        \bottomrule 
        \end{tabular}
    \end{table}

 \begin{table}[H] 
        \setlength\tabcolsep{3pt}
        \caption{Numerical results for QM9 dataset.}
        \label{result_optimality_QM9}
        \centering
        \vspace{1mm}
        \begin{tabular}{clrrrrrrrrrr}
        \toprule
        \multirow{2}{*}{$N$} & \multirow{2}{*}{method} & \multirow{2}{*}{$\#var_c$} & \multirow{2}{*}{$\#var_b$} & \multirow{2}{*}{$\#con$} & \multirow{2}{*}{$\#run$} & \multicolumn{3}{c}{$time_{tol}$ (s)} & \multicolumn{3}{c}{$time_{opt}$ (s)}\\
        &&&&&& mean & $Q_1$ & $Q_3$ & mean & $Q_1$ & $Q_3$ \\
        \midrule
        \multirow{4}{*}{$4$}
        & bilinear    & $\bm{1178}$ & $599$ & $3050$ & $50$ & $368$ & $252$ & $353$ & $240$ & $186$ & $279$ \\ 
        & bilinear+BrS & $1181$ & $580$ & $3020$ & $50$ & $256$ & $211$ & $288$ & $184$ & $60$ & $258$ \\
        & big-M        & $1340$ & $453$ & $2933$ & $50$ & $258$ & $193$ & $311$ & $190$ & $141$ & $235$ \\
        & big-M+BrS     & $1316$ & $\bm{441}$ & $\bm{2863}$ & $50$ & $\bm{167}$ & $\bm{89}$ & $\bm{227}$ & $\bm{111}$ & $\bm{43}$ & $\bm{171}$ \\
        \midrule
        \multirow{4}{*}{$5$} 
        & bilinear    & $\bm{1617}$ & $846$ & $4563$ & $48$ & $5439$ & $1108$ & $7573$ & $2604$ & $585$ & $1494$  \\ 
        & bilinear+BrS & $1627$ & $823$ & $4545$ & $50$ & $\bm{1039}$ & $704$ & $\bm{1248}$  & $\bm{628}$ & $579$ & $\bm{744}$ \\
        & big-M        & $1962$ & $584$ & $4480$ & $50$ & $3242$ & $1111$ & $3535$ & $1642$ & $757$ & $1762$  \\
        & big-M+BrS     & $1877$ & $\bm{564}$ & $\bm{4260}$ & $50$ & $1127$ & $\bm{672}$ & $1336$ & $640$ & $\bm{397}$ & $868$ \\
        \midrule
        \multirow{4}{*}{$6$} 
        & bilinear    & $2118$ & $1110$ & $6319$ & $21$ & $16767$ & $7598$ & $25642$ & $2969$ & $479$ & $1343$ \\ 
        & bilinear+BrS & $\bm{2068}$ & $1049$ & $6017$ & $48$ & $6907$ & $3595$ & $8691$ & $2394$ & $491$ & $1937$ \\
        & big-M        & $2768$ & $718$ & $6562$ & $29$ & $12625$ & $5573$ & $18020$ & $3626$ & $314$ & $2181$ \\
        & big-M+BrS     & $2500$ & $\bm{684}$ & $\bm{5829}$ & $50$ & $\bm{1985}$ & $\bm{660}$ & $\bm{2613}$ & $\bm{1132}$ & $\bm{302}$ & $\bm{1243}$ \\
        \midrule
        \multirow{4}{*}{$7$} 
        & bilinear    & $2681$ & $1405$ & $8348$ & $3$ & $22759$ & $19543$ & $24667$ & $\bm{1417}$ & $995$ & $1722$ \\ 
        & bilinear+BrS & $\bm{2571}$ & $1308$ & $7768$ & $34$ & $21649$ & $16723$ & $27748$ & $2983$ & $813$ & $3808$ \\
        & big-M        & $3549$ & $846$ & $8649$ & $1$ & $\bm{6210}$ & $6210$ & $\bm{6210}$ & $6108$ & $6108$ & $6108$ \\
        & big-M+BrS     & $3180$ & $\bm{810}$ & $\bm{7613}$ & $49$ & $6586$ & $\bm{2946}$ & $9655$ & $1427$ & $\bm{564}$ & $\bm{1255}$ \\
        \midrule
        \multirow{4}{*}{$8$} 
        & bilinear    & $3306$ & $1731$ & $10650$ & $0$ & $*$ &  $*$& $*$ & $*$ & $*$ & $*$ \\ 
        & bilinear+BrS & $\bm{3136}$ & $1597$ & $9790$ & $0$ & $*$ & $*$ & $*$ & $*$ & $*$ & $*$ \\
        & big-M        & $4420$ & $977$ & $11013$ & $0$ & $*$ & $*$ & $*$ & $*$ & $*$ & $*$ \\
        & big-M+BrS     & $3922$ & $\bm{935}$ & $\bm{9595}$ & $29$ & $\bm{17864}$ & $\bm{11113}$ & $\bm{21431}$ & $\bm{3405}$ & $\bm{705}$ & $\bm{4308}$ \\
        \bottomrule 
        \end{tabular}
    \end{table}

\end{document}